\newcommand{\V}[1]{\ensuremath{\mathbf{#1}}}
\newcommand{\iid}{\stackrel{\text{iid}}{\sim}}
\newcommand{\T}{\ensuremath{{\scriptscriptstyle \top}}}
\newcommand{\R}{\ensuremath{\mathbb{R}}}
\newcommand{\Nc}{\ensuremath{\mathcal{N}}}
\newcommand{\MD}{MD}
\theoremstyle{theorem} \newtheorem{thm}{Theorem}
\theoremstyle{theorem} \newtheorem{cor}{Corollary}
\theoremstyle{definition} \newtheorem{defn}{Definition}
\theoremstyle{definition} \newtheorem{assum}{Assumption}
\begin{document}

\title[OS of precision matrix for MD in manifold learning]{Optimal Recovery of {Precision Matrix for} Mahalanobis Distance {from High Dimensional Noisy Observations} in Manifold Learning}

\author{Matan Gavish}
\address{The School of Computer Science and Engineering, Hebrew University of Jerusalem}
\email{gavish@cs.huji.ac.il}
\author{Ronen Talmon}
\address{The Department of Electrical Engineering, Technion -- Israel Institute of Technology}
\email{ronen@ef.technion.ac.il}
\author{Pei-Chun Su}
\address{The Department of Mathematics, Duke University}
\email{b94401079@gmail.com}
\author{Hau-Tieng Wu}
\address{The Department of Mathematics and Department of Statistical Science, Duke University}
\email{hauwu@math.duke.edu}

\maketitle

\begin{abstract}
{Motivated by establishing theoretical foundations for various manifold learning algorithms, we study the problem of Mahalanobis distance (MD){, and the associated precision matrix,} estimation from high-dimensional noisy data.
  By relying on recent transformative results in covariance matrix estimation, we demonstrate the sensitivity of \MD~{and the associated precision matrix} to measurement noise, determining the exact asymptotic signal-to-noise ratio at which \MD~fails, and quantifying its performance otherwise.
  In addition, for an appropriate loss function, we propose an asymptotically optimal shrinker, which is shown to be beneficial over the classical implementation of the \MD, both analytically and in simulations. {The result is extended to the manifold setup, where the nonlinear interaction between curvature and high-dimensional noise is taken care of. The developed solution is applied to study a multiscale reduction problem in the dynamical system analysis.}}
Mahalanobis distance, large $p$ large $n$, optimal shrinkage, {precision matrix}\\
2000 Math Subject Classification: 34K30, 35K57, 35Q80,  92D25
\end{abstract}

\section{INTRODUCTION}\label{sec:introduction}

High-dimensional datasets encountered in modern science often
exhibit nonlinear low-dimensional structures. 
One prominent approach to deal with such point clouds is to model their nonlinear structures by manifolds.
In the last two decades, this direction has led to the emergence of a multitude of {\em manifold learning} methods, including the classical ISOMAP \cite{tenenbaum2000global}, locally linear embedding (LLE) \cite{Roweis_Saul:2000}, Hessian LLE \cite{Donoho_Grimes:2003}, eigenmap \cite{belkin2003laplacian}, and diffusion maps \cite{coifman2006diffusion}, as well as the more recent vector diffusion maps \cite{Singer_Wu:2012}, multiview diffusion maps \cite{Linderbaum2015}, and alternating diffusion maps \cite{lederman2015learning,talmon2016latent}, to name but a few.
Typically in manifold learning, point clouds in $\mathbb{R}^p$ are assumed to be sampled from a $d$-dimensional smooth manifold $\mathcal{M}$ embedded in $\mathbb{R}^p$, usually with some additional contaminating noise. In this setting, the manifold represents the ``essence'' or the ``signal'' of the data.
Consequently, the goal in manifold learning is to recover the geometric or topological structure of the manifold from the data points, and in turn, to use the recovered structure to embed the high-dimensional data in a low-dimension space, facilitating a compact and informative representation of the data.
This approach has been successfully applied to applications from a broad range of fields, e.g. dynamical systems modeling \cite{talmon2015manifold,yair2017reconstruction}, sleep stage assessment \cite{wu2015assess,liu2018diffuse}, cryo electro microscope \cite{singer2018mathematics}, image denoising \cite{singer2009NLM}, single channel blind source separation like fetal electrocardiogram analysis \cite{su2019recovery} and stimulation artifact removal for the intracranial electroencephalogram \cite{alagapan2018diffusion}, and long term physiological signal visualization and analysis \cite{lin2021wave,wang2020novel}.

Classical manifold learning methods heavily rely on meaningful measures of 
pairwise discrepancy between data points.   
In this so-called {\em metric design} problem, the data analyst
aims to find a useful {\em metric} representing the relationship between data points
embedded in a high-dimensional space. 
In this paper, we study the Mahalanobis Distance (MD) -- a popular, and arguably the first
method for metric design \cite{mahalanobis1936generalized, McLachlan1999}.
 \MD~ was originally proposed in 1936 
with the classical low-dimensional setting in mind, namely, for the case where
the ambient dimension $p$ is much larger than the dataset size $n$. 
Interestingly, due to its useful statistical and invariance properties, \MD~ 
became the basis of several geometric data analysis techniques \cite{Yang2006DistanceML,
Weinberger:2009:DML:1577069.1577078, Xiang2008LearningAM}, aimed specifically at the high-dimensional regime $p\asymp n$.

In a recent line of work \cite{singer2008non,talmon2013empirical}, a variant of \MD~was proposed and used to reveal hidden intrinsic manifolds underlying high-dimensional, possibly multimodal, observations. The main purpose of \MD~in this hidden manifold setup is handling possible deformations caused by the observation or sampling process. Broadly, this is carried out by estimating a quadratic form of the Jacobian of the (unknown) observation function, which is equivalent to estimating the precision matrix \emph{locally} on the manifold. 
It was recently shown that \MD~is also implicitly used in the seminal LLE 
algorithm \cite{Roweis_Saul:2000}, when the barycenter step is properly expressed \cite{2018arXiv180402811M}. 

As the number of dimensions $p$ in typical data analysis applications continues to
grow, it becomes increasingly crucial to understand the behaviour of \MD, as well
as other metric design algorithms, in the high-dimensional regime $p\asymp n$.
At first glance, it might seem that this regime poses little more than a
computational inconvenience for metric design using \MD. Indeed, it is
easy to show that in the absence of measurement noise, \MD~cares little
about the increase in the ambient dimension $p$.  
This paper however calls attention to the following key observation. In the high-dimensional regime $n\asymp p$, 
{\em in the presence of ambient measurement noise}, a new phenomenon emerges, which introduces {various} nontrivial effects on the performance of \MD. 
Depending on the noise level, in the high-dimensional regime, \MD~may be adversarially affected or even fail completely. 
Clearly, the assumption of measurement noise cannot be realistically excluded, and yet, to the best of our knowledge, this phenomenon has not been 
 previously fully studied. A first step in this direction was taken in 
 \cite{10.1007/978-3-319-73241-1_4}, 
 with the calculation of the distribution of \MD~under specific assumptions.

Let us describe this key phenomenon informally at first.
The computation of \MD~involves estimation of the inverse covariance matrix, or precision matrix, corresponding to the
data at hand. Classically, the estimation relies on the sample covariance, which is inverted using the Moore-Penrose pseudo-inverse.
It is well-known that, in the {high dimensional setup, or in the} regime $n\asymp p$, the sample covariance matrix is a poor estimator of the
underlying covariance matrix. 
Indeed, advances in random matrix theory from the last decade imply that the eigenvalues and
eigenvectors of the sample covariance matrix are both {biased}, namely, do not
converge to the corresponding eigenvalues and eigenvectors of the underlying population
covariance matrix \cite{Paul2007,johnstone2001}. 
Such {biases} in small eigenvalues, which lead to inaccurate covariance matrix estimation, become immense when applying the Moore-Penrose pseudo-inverse.

{This challenge in the high dimensional setup is amplified when we have a nonlinear manifold structure. Inverting small (inconsistent) eigenvalues is challenging when evaluating the precision matrix, and this issue becomes more challenging}
in the context of manifold learning, where the estimation of MD is performed locally \cite{singer2008non,talmon2013empirical}. Ideally, under the manifold assumption, in infinitely small neighborhoods and for a sufficiently large number of samples {without noise}, the rank of the local sample covariance equals the dimension of the manifold, which is typically smaller than the dimension of the ambient space, making the sample covariance low-rank with distinct strictly zero eigenvalues. However, in practice, due to the finite sample set, the considered neighborhoods cannot be sufficiently small. In such cases, depending on the manifold curvature, samples from the manifold depart from the tangent space to the manifold, and the rank of their sample covariance matrix undesirably grow, {where the eigenvalues related to curvature are much smaller compared with those related to the tangent space. When noise exists, the situation is further complicated by the interaction between those curvature related small eigenvalues and the relatively few points compared with the ambient space dimension.}

In this paper, we study this problem and propose a remedy. By relying on formal existing results in covariance matrix estimation, we measure the sensitivity of \MD~to measurement noise. 
Under the assumption that locally the data on the manifold lie on a low-dimensional
linear subspace embedded in the ambient space $\R^p$ and that the measurement
noise is Gaussian white, we are able to determine the exact asymptotic signal-to-noise ratio at which \MD~fails, and quantify its performance otherwise.
{Then, we propose a better \MD~estimator based on the idea of {\em shrinkage} of the associated precision matrix.} It has been known since the 1970's  
\cite{Stein1986} that by shrinking the sample covariance eigenvalues one can
significantly mitigate the noise effects and improve the covariance estimation
in high-dimensions. We formulate the classical \MD~as a particular choice of 
shrinkage estimator for the eigenvalues of the sample covariance.
Building on recent results in high-dimensional covariance estimation, including the general theory in \cite{2013arXiv1311.0851D} and a special case with application in random tomography \cite[Section 4.4]{SingerWu2012}, we find an {\em asymptotically optimal shrinker} for the precision matrix estimation, which is better than the classical implementation of the \MD, whenever \MD~is computed from noisy high-dimensional data.
We show that under a suitable choice of a loss function for the estimation of \MD, our
shrinker is the unique asymptotically optimal shrinker; the improvement in
asymptotic loss it offers over the classical \MD~is calculated exactly. {We then extend the above established results to handle the challenge of designing a better metric when applying diffusion map. This extension is nontrivial due to the nontrivial interaction of curvature and noise, and the finite sampling size. Finally, we apply diffusion maps with the proposed estimate of \MD to separate slow and fast dynamics when the observation is contaminated by high dimensional noise.}

While the present paper focuses on \MD, we posit that the same phenomenon holds
much more broadly and in fact affects several widely-used manifold learning, particular in metric learning algorithms. In this regard, the present paper seeks to highlight the fact that manifold learning and metric learning algorithms will
{\em not} perform as predicted by the noiseless theory in high dimensions, and may fail completely beyond a certain noise level. 

\section{PROBLEM SETUP} \label{sec:setup}


\subsection{Manifold model}\label{Section:ManifoldModel}

When a point cloud $\mathcal{X}:=\{x_i\}_{i=1}^n\subset \mathbb{R}^p$ has a nontrivial nonlinear structure, or even nontrivial topological structure, a common approach is to model this structure by a manifold.
This is known as the so-called \emph{manifold assumption}. {Such manifold assumption holds for various practical data. Examples include cryo-electro microscopy \cite{singer2018mathematics}, phase spaces of dynamical systems \cite{talmon2015manifold,yair2017reconstruction}, and various biomedical signals \cite{su2019recovery,alagapan2018diffusion,lin2021wave,wang2020novel}. The main feature of this manifold assumption is that the points are distributed on a nonlinear set so that they are nonlinearly related, which generalizes the commonly used linear model.}

{To model $\mathcal{X}$ by the manifold model, consider a $p$-dimensional random vector $X:\Omega \rightarrow \mathbb{R}^p$, which is a measurable function with respect to the probability space $(\Omega,\mathcal{F},\mathbb{P})$, where $\mathbb{P}$ is the probability measure defined on the sigma algebra $\mathcal{F}$ in {the event space} $\Omega$. We assume} that the range of $X$ is supported on a $d$-dimensional compact, smooth Riemannian manifold $(M,g)$ isometrically embedded in $\mathbb{R}^p$ via $\iota:M\hookrightarrow \mathbb{R}^p$. 
In this work, we assume that $M$ is boundary-free to simplify the exposition. 
{We shall mention that the commonly considered linear model is a special manifold model, where the manifold is an affine linear subspace space of $\mathbb{R}^p$.}


On the manifold, the associated statistical setup is as follows. Let $\tilde{\mathcal{B}}$ be the Borel sigma algebra of $\iota(M)$, and let $\tilde{\mathbb{P}}_X$ denote the probability measure induced from $X$. Clearly, $\tilde{\mathbb{P}}_X$ is defined on $\tilde{\mathcal{B}}$. {Denote $d V$ to be the Riemannian volume density of $M$ associated with the metric $g$.}
For simplicity, we assume that $\tilde{\mathbb{P}}_{X}$ is absolutely continuous with respect to the induced Riemannian measure on $\iota(M)$, denoted by $\iota_* dV$. 
By the Radon-Nikodym theorem, for any $z \in \iota(M) \subset \mathbb{R}^p$, there exists a non-negative measurable function $P(z)$ defined on $\iota(M)$ such that $d \tilde{\mathbb{P}}_{X}(z)=P(z)\iota_* d V(z)$.
The probability density function (pdf) of $X$ on $M$ is defined to be $P(z)$. We further assume that $P(z)$ is bounded away from $0$ and smooth. When $P(z)$ is constant, we call $X$ a \textit{uniform} random sampling scheme; otherwise it is \textit{nonuniform}. {Now we introduce the key quantity of interest in this paper, the local covariance matrix.}

\begin{defn}
Fix $x\in M$. For an open simply connected neighborhood of $x$, $\mathcal{O}(x)\subset\mathbb{R}^p$, define
\begin{equation}\label{eq:local_covariance}
\Sigma_{\mathcal{O}(x)}:=\mathbb{E}[(X-\mu_{\mathcal{O}(x)})(X-\mu_{\mathcal{O}(x)})^{\top}\chi_{\mathcal{O}(x)}(X)]\in\mathbb{R}^{p\times p}
\end{equation}
as the \textit{local covariance matrix} associated with $\mathcal{O}(x)$ centered at $\mu_{\mathcal{O}(x)}$, where
\[
	\mu_{\mathcal{O}(x)} := \mathbb{E}[X\chi_{\mathcal{O}(x)}(X)] = \frac{1}{|\mathcal{O}(x)\cap \iota(M)|}\int_{\mathcal{O}(x)\cap \iota(M)}zP(z)dz\,,
\]
$|\mathcal{O}(x)\cap \iota(M)|$ is the volume of $\mathcal{O}(x)\cap \iota(M)$, and $\chi_{\mathcal{O}(x)}$ is the indicator function of the set $\mathcal{O}(x)$.
\end{defn}

{One main goal of considering this local covariance is capturing those directions with maximal variation of the dataset. From the knowledge of principal component analysis, those directions are the eigenvectors of the local covariance matrix with the largest eigenvalues.} We have some remarks. {First, by Nash's isometric embedding theorem, $\Sigma_{\mathcal{O}(x)}$ is of rank $D$, where $D\leq d(3d+11)/2$ for any $\mathcal{O}(x)$. Second,} in existing literature, there is another different definition of the local covariance matrix, in which the mean $\mu_{\mathcal{O}(x)}$ is replaced with $\iota(x)$. In general, the two definitions are different, even when the set is perfectly symmetric, e.g. $\mathcal{O}(x)$ is a ball, and $P(x)$ is uniform. Indeed, since the range of $X$ is supported on $\iota(M)$, when $\iota(M)$ is not flat, $\iota(x)$ might deviate from the center of $\mathcal{O}(x)$ due to the curvature of the manifold; that is, $\frac{1}{|\mathcal{O}(x)\cap \iota(M)|}\int_{\mathcal{O}(x)\cap \iota{M}}(z-\iota(x))dz \neq 0$. While this seems to be a problem, it was shown in \cite{2018arXiv180402811M} that the difference between the center of $\mathcal{O}(x)$ and $\iota(x)$ is negligible (expressed as a higher order term in the error) when the diameter of $\mathcal{O}(x)$ is sufficiently small. 
Broadly, since locally the manifold can be well approximated by an affine subspace, when the diameter of $\mathcal{O}(x)$ is sufficiently small, the data located in $\mathcal{O}(x)$ can be well approximated by the tangent space to the manifold at $x$, i.e., $T_xM$. This point will be further addressed in Section \ref{sec:linearSpike}.

The above derivation leads to the following local statistical model, which enables to further study the local structure of a manifold. For $x\in M$ and an open, simply connected neighborhood of $x$, $\mathcal{O}(x)\subset \mathbb{R}^p$, we define a new random vector
\begin{equation}\label{ManifoldModel:X}
X_x:=X\chi_{\mathcal{O}(x)}{(X)}, 
\end{equation}
with mean $\mu_x := \mu_{\mathcal{O}(x)}$ and covariance matrix $\Sigma_x := \Sigma_{\mathcal{O}(x)}$. By definition, $X_x$ is a bounded random vector, and hence, all its moments are finite. Also, as has been widely discussed in the literature (see \cite{2018arXiv180402811M} and reference therein), the first $d$ dominant eigenvectors of $\Sigma_x$ form an accurate estimate of the tangent space to the manifold at $x$. Specifically, if $\Sigma_xu_l=\lambda_lu_l$, for $l=1,\ldots,p$, where $\lambda_1\geq \lambda_2\ldots\geq\lambda_p$, then the span of $\{u_1,\ldots,u_d\}$ approximates $\iota_*T_xM$.

Often in applications, the manifold is not directly accessible due to additional noise, and we can only sample
\begin{equation}
Y=X+\sigma \xi,
\end{equation}
where $\xi\sim \mathcal{N}(0,I_p)$. As a consequence, for $x\in M$, in the presence of noise, the corresponding local random vector can be recast as
\begin{equation}\label{eq:manifold_y}
Y_x:=Y\chi_{\mathcal{O}(x)}{(X)},
\end{equation}
with mean $\mu_x := \mu_{\mathcal{O}(x)}$ and covariance matrix $\Sigma_x +\sigma^2 I_p$. Although $Y_x$ is not a bounded random vector, all its moments are finite.

We remark that the local set $\mathcal{O}(x)$ can be defined in several plausible ways, depending on the problem at hand. A common choice is the ball $B_x(\epsilon)$ with the center at $\iota(x)$ and the radius $\epsilon>0$. In other applications, $\mathcal{O}(x)$ might be an ellipsoid \cite{singer2008non,talmon2013empirical,2018arXiv180402811M} or a more general setup, depending on the metric of interest. We will revisit to this issue in the sequel.

\subsection{Linear spiked model}\label{sec:linearSpike}

In manifold learning, local kernels are commonly used, e.g., kernels based on radial basis functions. The use of kernels implies that only points around the center of the kernel $x\in M$ contribute to the algorithm outcome. Therefore, considering those points located inside the neighborhood of $x$, $\mathcal{O}(x)$, is sufficient for analyzing a manifold learning algorithm with a local kernel. Since the bandwidth of the kernel is typically small, the diameter of the local set $\mathcal{O}(x)$ is small as well. Consequently, by the definition of a manifold, data in $\mathcal{O}(x)$ can be well approximated by the tangent space to the manifold at $x$, {which is a low dimensional affine subspace}.
{Note that in the special case that the manifold is a linear affine subspace, all points are located by a low dimensional space.} 

Since {\em locally} the manifold can be viewed merely as a linear subspace, the local statistical model described in Section \ref{Section:ManifoldModel} can be well approximated by the classical linear spiked model {(or spiked covariance model \cite{johnstone2001})}, which is detailed next with slight modifications in the notation.  
{We note that in this section, deviations of the samples from a linear space due to the manifold curvature will not be treated, and their affect will be considered together with the affect of the ambient noise. However, we will extend and test the ability of the proposed estimator to handle such phenomena in the simulation study in Section \ref{sec:simulation}.}

{We now consider the spike model. In plain English, a spike model is a manifold model when the manifold is an affine subspace, where the dimension of the affine subspace is fixed.} Consider a point cloud in $\R^p$ supported on a $d$-dimensional linear subspace, where $d\leq p$. 
For simplicity, we assume that the point cloud is sampled independently and identically (i.i.d.) from 
\begin{align}\label{SpikeModel:X}
	X \sim \Nc \left( \V{\mu}, \Sigma_X \right) \in \mathbb{R}^p\,,
\end{align}
where $\V{\mu}$ denotes the mean and $\Sigma_X$ denotes the population covariance matrix, whose rank is equal to $d$. Note that we could consider a random vector with mean $0$ and a finite fourth moment \cite{2013arXiv1311.0851D}; since this could only increase the notational burden without providing additional insights,  we focus on this simplified model.

It is often convenient to note that a point cloud sampled from $X$ can be understood as sampling i.i.d. from a $p$-dim random vector 
\begin{equation}\label{Equation:rewrittenX}
X=\V{\mu}+\sum_{l=1}^d\sqrt{\lambda_l}\zeta_l u_l, 
\end{equation}
where $\lambda_l>0$, $\zeta_l\sim \mathcal{N}(0,1)$, $\mathbb{E}\zeta_l\zeta_k=\delta_{l,k}$, $\Sigma_Xu_l=\lambda_lu_l$ and $\|u_l\|_{L^2}=1$ for $l=1,\ldots,d$. Thus, the $d$-dimensional linear affine subspace is the space spanned by $u_1,\ldots,u_d$, {which could be understood as spikes and hence the nomination of the model,} and shifted by $\V{\mu}$. We note that this global linear model is related to the local structure of a manifold in \eqref{ManifoldModel:X} in the following way: $\mu$ here is the center point $x$ on the manifold in \eqref{ManifoldModel:X}, and the $d$-dimensional linear affine subspace spanned by $u_1,\ldots,u_d$ is the tangent space at $x${, while we need extra components to capture the curvature of the manifold}.

Similarly to the manifold model, suppose that the samples of $X$, which we refer to as the {\em signal}, are not directly
observable. Instead, the observed data consist of samples from the random variable
\begin{equation}\label{SpikeModel:Y}
    Y = X + \sigma \xi\,,
\end{equation}
where $0\leq \sigma<\infty$ and $\xi\sim\Nc(0,I_p)$ is a Gaussian measurement noise independent of $X$, which we assume for simplicity to be white.

\section{{PRECISION MATRIX AND} MAHALANOBIS DISTANCE ESTIMATION}
\label{sec:mahalanobis_estimation}

The focal point of this paper is the estimation of the \MD~under the manifold model from a statistical standpoint, which, as described in Section \ref{sec:setup}, leads to the classical linear spiked model.
The estimation of the \MD~involves the estimation of the local precision matrix. Therefore, we start this section with details on the estimation of the precision matrix in Section \ref{sec:precision}, followed by a detailed description of the estimation of the \MD~in Section \ref{sec:Mahalanobis_dist}.
 
\subsection{Precision matrix estimation {and its challenge}}
\label{sec:precision}

In Section \ref{Section:ManifoldModel} and Section \ref{sec:linearSpike}, we showed that the local and global statistical models are seemingly very similar.
Indeed, at first glance, both models consist of a hidden ``signal'' component (\eqref{ManifoldModel:X} and \eqref{SpikeModel:X}) and noisy accessible observations (\eqref{eq:manifold_y} and \eqref{SpikeModel:Y}).
Furthermore, under both models, the local and global population covariance matrices of the ``signal'' component ($\Sigma_x$ and $\Sigma_X$) are approximately low rank and precisely low rank, respectively. 

However, one of the main claims of this work is that although the two models seem equivalent, subtle differences between them, particularly in the context of precision matrix estimation, become fundamental.

Let us consider first the estimation of the precision matrix in the simpler, classical linear spiked model described in \eqref{SpikeModel:X} and \eqref{SpikeModel:Y}.
Without noise, the computation of the precision matrix of $X$ could be simply implemented using the Moore-Penrose pseudo-inverse. Suppose the eigendecomposition of $\Sigma_X$ is given by $\Sigma_X=U\textup{diag}(\lambda_1,\ldots,\lambda_d,0,\ldots,0)U^\top$ where $\lambda_1\geq\lambda_2\geq\ldots$ and $U\in O(p)$. Then, the pseudo-inverse is given by $\Sigma_X^{\dagger}=U\textup{diag}(1/\lambda_1,\ldots,1/\lambda_d,0,\ldots,0)U^\top$, namely, inverting the non-zero eigenvalues.
The introduction of additive noise poses a significant challenge for such an estimation, since the small eigenvalues could be mixed with the noise. While the contribution of such small eigenvalues is limited in the composition of the covariance matrix, in the composition of the precision matrix their affect becomes significant.

In addition to this classical challenge, the estimation of the precision matrix under the manifold model poses another layer of complexity. 
To simplify the discussion, we assume that $\mathcal{O}(x)$ is a simply connected ball, that is $\mathcal{O}(x)=B_{\epsilon}^{\mathbb{R}^p}(\mu_x)$, where $B_{\epsilon}^{\mathbb{R}^p}(\mu_x)$ is a Euclidean ball in $\mathbb{R}^p$ of radius $\epsilon>0$ centered at $\mu_x$ with a sufficiently small $\epsilon$. In this case, the geometric picture of the local covariance matrix is well captured by \cite[Proposition 3.1]{Wu_Wu:2017}, which is summarized as follows. Fix $x\in M$. Assume that the manifold is translated and rotated properly, so that $x=0$ and the tangent space in $\mathbb{R}^p$, $\iota_*T_x M$, is spanned by $\{e_1,\ldots,e_d\}$, where $e_j$ is a unit $p$-dim vector with $1$ in the $j$-th entry. We have the following asymptotical expansion of \eqref{ManifoldModel:X} when $\epsilon>0$ is sufficiently small:
\begin{align}\label{Expansion:LocalCov}
\Sigma_{B_{\epsilon}^{\mathbb{R}^p}(\mu_x)} =\frac{|S^{d-1}|  {P}(x)}{d(d+2)} \epsilon^{d+2}
\Big(\begin{bmatrix}
I_{d \times d} & 0 \\
0& 0  \\
\end{bmatrix}+
O(\epsilon^{2})\Big) \,,
\end{align}
where $S^{d-1}$ is a $(d-1)$-sphere, {$|S^{d-1}|$ is the volume of $S^{d-1}$,} and the implied constant in $O(\epsilon^2)$ depends on the second fundamental form at $x$.
From a statistical perspective, this expansion well captures the intuition that the variability of the data located in $B_{\epsilon}^{\mathbb{R}^p}(\mu_x)$ is restricted to only few directions aligned with $T_xM$. Indeed, by applying the perturbation theory to \eqref{Expansion:LocalCov}, the $d$ eigenvectors of $\Sigma_{B_{\epsilon}^{\mathbb{R}^p}(\mu_x)}$ corresponding to the largest $d$ eigenvalues provide an approximate basis for the embedded tangent plane $\iota_*T_xM$. 
The expression of the covariance matrix in \eqref{Expansion:LocalCov} implies that there is a significant spectral gap between the top $d$ eigenvalues and the remaining ones, which depend on the curvature of the manifold. Furthermore, these remaining eigenvalues could be very small with order higher than $\epsilon^{d+4}$. 
In other words, under the manifold setting, in contrast to the linear spiked model, even in the noiseless case, the covariance is not strictly low rank with only $d$ nonzero eigenvalues. Such small eigenvalues due to the curvature of the manifold, {if not properly taken care before taking inverse to estimate the precision matrix, the obtained precision matrix might be deformed. Such small eigenvalues can be} added to the noise components and together they obscure the spectral gap. {Thus, it} is necessary for determining the ``effective rank'' of the matrix{, which is} essential for the calculation of the precision matrix. 
For more details, we refer the readers to the detailed discussion in \cite{Wu_Wu:2017,2018arXiv180402811M}.

In addition to the above challenge due to the curvature, the presence of noise particularly in the manifold setting imposes another challenge -- that is, how to find the {\em true} neighbors? Specifically, note that the {\em neighboring points} of $x$ denoted by $\chi_{\mathcal{O}(x)}$ need to be identified from the noisy samples.
When $\sigma=0$, i.e. in the noiseless case, a neighbor can be easily identified if the diameter of $\mathcal{O}(x)$ is sufficiently small. However, when $\sigma>0$, i.e., in the presence of noise, it is not clear if a neighbor determined from the noisy point cloud is truly a neighbor. Concretely, let $\mathcal{X}=\{x_i\}_{i=1}^n\subset \iota(M)\subset \mathbb{R}^p$ denote a set of identical and independent (i.i.d.) random samples from $X$, and $\mathcal{Y}=\{y_i\}$, where $y_i=x_i+\sigma \xi_i$ is sampled from $Y$. Then, in general $\|y_i-y_j\|_{\mathbb{R}^p}\leq \epsilon$ does not imply $\|x_i-x_j\|_{\mathbb{R}^p}\leq \epsilon$. Thus, a naive approach to determine neighbors might fail. To the best of our knowledge, there are only few existing algorithms for determining neighbors when the point cloud is noisy. For example, determining a neighbor by the diffusion distance \cite{coifman2006diffusion}, which has a solid theoretical support {when noise exists} \cite{ElKaroui:2010a,ElKaroui_Wu:2016b}. Since finding neighbors is still a challenging problem on its own,
in this paper, we focus only on estimating the precision matrix, and subsequently, the \MD, assuming that the {\em true} neighbors are known.

\subsection{Mahalanobis distance}
\label{sec:Mahalanobis_dist}

We are now ready to define the \MD: we start with the definition of the  \MD~under the manifold model, and then for reference, we present the typical definition under the global linear spiked model.

Under the manifold model, since the covariance matrix may have no strictly zero eigenvalues even in the noiseless case due to the curvature, we consider the following definition of the \MD.

\begin{defn}[Mahalanobis distance under the manifold model]\label{Definition:MD:manifoldModel}
Suppose $x\in M$. The \MD~between $z \in \iota(M)$ and $X_x$ is defined by
\begin{equation} \label{trueManifold:eq}
    d_{\Sigma_x}^2(z, X_x) :=  
	(z - \mu_x)^\top \mathcal{I}_d( \Sigma_x) (z - \mu_x),
\end{equation}
where $\mathcal{I}_d( \Sigma_x)$ is the {\em truncated pseudo-inverse of degree $d$} defined by 
\[
\mathcal{I}_d( \Sigma_x):=U\textup{diag}(1/\lambda_1,\ldots,1/\lambda_d,0,\ldots,0)U^\top, 
\]
$\Sigma_x=U\textup{diag}(\lambda_1,\ldots,\lambda_p)U^\top$ is the eigendecomposition of $\Sigma_x$, and $\lambda_1\geq\lambda_2\geq\ldots$.
\end{defn}
Note that the knowledge of the manifold dimension $d$ is required for this definition; yet, it is in general not available and needs to be estimated. 

For comparison, consider also the classical definition of the \MD~under the global linear spiked model.
\begin{defn}[Mahalanobis distance under the linear spiked model]\label{Definition:MD under linear spike}
The \MD~between an arbitrary point
${z}\in\mathbb{R}^p$ and the underlying signal distribution $X$ \eqref{SpikeModel:X} is defined by
\begin{equation} \label{true:eq}
    d_{\Sigma_X}^2({z}, X) =  
	({z} - {\mu})^\top  \Sigma_X^{\dagger} ({z} - {\mu}),
\end{equation}
where $\dagger$ denotes the Moore-Penrose pseudo-inverse.
\end{defn}

Let us take a closer look at the latter definition. Since $\Sigma_{X}$ is semi-positive definite, by the Cholesky decomposition, we have $\Sigma_{X}^\dagger=WW^\top$, where
$W\in \mathbb{R}^{p\times d}$.
Hence 
\begin{align}
    d^2_{\Sigma_X}({z}, X)= ({z} - {\mu})^\top (WW^\top )({z}-{\mu})
	= \|W^\top ({z}- {\mu})\|_{\mathbb{R}^p}^2\,, \nonumber
\end{align}
which indicates that geometrically, \MD~evaluates the relationship between ${z}$ and (the mean of) $X$ by a proper linear transform. In Section \ref{sec:eig}, we relate $W^\top$ to the inverse of the Jacobian of arbitrary unknown observation functions and show that it gives rise to an important invariance property of the \MD~in the context of manifold learning.
Here, we only demonstrate a primary merit of \MD, which stems from its invariance to rotation and rescaling. Importantly, this invariance property holds for both definitions: under the linear spiked model as well as under the manifold model.
Consider the linear spiked model and a random variable $\widetilde{X} =
cAX$, where $c\in \mathbb{R}$ models rescaling and $A \in O(p)$ models rotation. Here $O(p)$ denotes the
group of $p$-by-$p$ orthogonal matrices. The population covariance matrix of $\widetilde{X}$ is
\begin{equation}\label{Sigma_Z:eq}
\Sigma_{\widetilde{X}} = c^2A \Sigma_X A^\top \,
\end{equation} 
and its population mean is rotated and rescaled to ${\tilde{\mu}}=cA{\mu}$. 
To demonstrating the invariance, suppose ${\tilde{z}}=cA{z}$ and observe the \MD~between ${\tilde{z}}$ and $\widetilde{X}$:
\begin{align}
    d_{\Sigma_{\widetilde{X}}}^2({\tilde{z}}, \widetilde{X})
    \nonumber & = (cA({z} - {\mu}))^\top  \Sigma_{\widetilde{X}}^{\dagger} (cA({z} - {\mu})) = ({z} - {\mu})^\top  \Sigma_X^{\dagger} ({z} - {\mu})=d_{\Sigma_X}^2({z}, X).
\end{align}
The same argument holds for the \MD~under the manifold model; for brevity, we omit the details.

Now, recall that the goal in this work is to estimate the \MD~between a point $z\in\mathbb{R}^p$ and $X$, when we only have access to noisy data sampled from $Y$ in \eqref{SpikeModel:Y}.
Concretely, assume that ${y}_1,\ldots,{y}_n \iid Y$ is a sample of $n$ data points. 
Since $\Sigma_X$ is unknown, the quantity $d_{\Sigma_X}({z}, X)$, or $\Sigma_X^\dagger$ in \eqref{true:eq}, must be estimated from the (noisy) data. 
For simplicity, we assume below that ${\mu}$ and $\sigma$ are
known; these assumptions can easily be removed in applications to real data. 
As discussed in Section \ref{sec:precision}, the notable challenge in estimating the \MD~from local samples of $Y$ is the estimation of the precision matrix via the pseudo-inverse, stemming from the interaction of the small eigenvalues of $\Sigma_X$ and the noise.
In addition, as discussed in Section \ref{sec:precision}, the main difference between \eqref{SpikeModel:X} and \eqref{ManifoldModel:X} is that in \eqref{SpikeModel:X}, the covariance matrix $\Sigma_X$ is global and strictly of low rank, whereas in \eqref{ManifoldModel:X}, the covariance matrix $\Sigma_x$ depends on $x$, and its rank is in general higher than $d$ when the manifold is of dimension $d$, yet only its first $d$ eigenvalues are dominant. 
In the context of \MD, this rank difference might lead to unwanted consequences if we consider Definition \ref{Definition:MD under linear spike}. Below we demonstrate it with an example. 

Assume for simplicity that the pdf is uniform, $\mathcal{O}(x)=B_{\epsilon}^{\mathbb{R}^p}(\iota(x))$ and $\epsilon$ is sufficiently small.
From \eqref{Expansion:LocalCov}, we observe a clear spectral gap between the first $d$ eigenvalues and the remaining ones. Therefore, we would expect that the behavior of $X_x$ in \eqref{ManifoldModel:X} is similar to that of $X$ in \eqref{SpikeModel:X}. However, we need to be careful with the associated precision matrices, and hence, with the two definitions of \MD. Suppose that the main interest in estimating the \MD~is recovering the geodesic distance when the manifold cannot be directly accessed. It has been shown in \cite[Theorem 8 (3)]{2018arXiv180402811M} that even if the manifold can be directly accessed, defining the \MD~between a point $z\in \iota(M)$ and $X_x$ by $\tilde{d}^2_{\Sigma_x}(z, X_x)=(z - x)^\top  \Sigma_x^{\dagger} (z -x)$ as in \eqref{true:eq} might lead to a biased estimate of the geodesic distance. 
To be more precise, when the rank of $\Sigma_x$ is greater than $d$ and $\|x-z\|_{\mathbb{R}^p}=t$ is sufficiently small, asymptotically we have
\begin{align}\label{ManifoldModel:LocalCovariance:Ballbad}
\frac{d(d+2)}{|S^{d-1}|\epsilon^{d+2}} \tilde{d}_{\Sigma^\dagger_x}(z, X_x)=t+O(t)\,,
\end{align}
which means that the \MD~cannot even recover the basic geodesic distance since the order of the error is of the same order as the geodesic distance.
Broadly, this bias is the result of the interaction of the small eigenvalues related to the curvature{, particularly the second fundamental form,} and the fact that the vector $z - x$ is not intrinsic to the manifold {and might contain component that is normal to the tangent space at $x$}.

%



\subsection{Motivating example}
\label{sec:eig}

A motivating example for the centrality of the \MD~under the manifold model in data analysis was presented in \cite{singer2008non}, and later elaborated in the empirical intrinsic geometry (EIG) framework presented in \cite{talmon2013empirical,talmon2015intrinsic} with various applications, e.g., \cite{talmon2011parametrization,dsilva2013nonlinear,wu2015assess,dsilva2016data,dov2016kernel,liu2018diffuse,lahav2018mahalanobis}.

Suppose the manifold $M$ is merely an image of a different, intrinsic, and inaccessible manifold of interest $N$, that is $M = \phi(N)$, where $\phi$ is a diffeomorphic map. {We may call $N$ the latent space.} In applications, $\phi$ could represent the distortion of the data introduced by some measurement equipment. This ``distortion model'' is often of key important since it critically affects the intrinsic information {$N$} we have interest in{, even if noise does not exist}.
Under this model, it is shown in \cite[Section 3]{singer2008non} that when $M$ and $N$ are both Euclidean spaces, for $x,\bar{x} \in M$, the following holds:
\begin{equation}\label{eq:EIG_dis}
\|{\theta}-\bar{\theta}\|_2^{2}=\frac{1}{2}(x-\bar{x})^{\top}
\big[C^{\dagger}+\bar{C}^{\dagger}\big](x-\bar{x})+O(\|x-\bar{x}\|_2^{4})\,,
\end{equation}
where $\theta = \phi^{-1}(x),\bar{\theta} = \phi^{-1}(\bar{x})$ and $C = \nabla\phi|_{\theta}\nabla\phi|_{\theta}^\top, \bar{C} = \nabla\phi|_{\bar{\theta}}\nabla\phi|_{\bar{\theta}}^\top$.
A similar statement for the case when $M$ and $N$ are both manifolds and $\phi$ is a diffeomorphism is given in \cite{2018arXiv180402811M}. It was further shown in \cite{singer2008non} and \cite{2018arXiv180402811M} that if we have i.i.d. sampled from $M$ adhering to the statistical manifold models discussed above, then 
\begin{equation}\label{eq:EIG_cov}
	\Sigma_x = \nabla\phi|_{\theta} \nabla\phi|_{\theta}^{\top}
\end{equation}
for a sufficiently small neighborhood $\mathcal{O}(x)$.
Remarkably, by combining \eqref{eq:EIG_dis} and \eqref{eq:EIG_cov}, a small variant of the \MD~can recover the distance between ${\theta}$ and $\bar{\theta}$ from the hidden intrinsic manifold $M_{\theta}$ based on samples from $M$ without explicit information on the map $\phi$, thereby solving a completely blind inverse problem.

The above distortion model was studied in \cite{talmon2013empirical} in the context of a nonlinear dynamical system.
Denote the dataset or the point cloud as $\mathcal{U}:=\{u_j\}_{j=1}^n\subset \mathbb{R}^{q}$, where $u_j$ is sampled at the $j$-th time stamp. The key assumption is that $\mathcal{U}$ comes from observing
an inaccessible intrinsic dynamics $\theta(t)\in \mathbb{R}^p$ that satisfies the stochastic differential equation
\begin{equation}\label{SDE_theta}
d\theta(t)=a(\theta(t))dt+d\omega(t)\,,
\end{equation}
where $a$ is an unknown drift function and $\omega$ is the standard $d$-dimensional Brownian motion. We call the subset of $\mathbb{R}^p$ that hosts $\theta(t)$ the {\em phase space}; for example, an open subset of $\mathbb{R}^p$, or a smooth manifold embedded in $\mathbb{R}^p$. 

The observation is modeled by a diffeomorphic function $\Phi: \mathbb{R}^{p}\rightarrow \mathbb{R}^{q}$ so that $u_j=\Phi(\theta_j)$, where $\theta_j$ is sampled from the intrinsic dynamics $\theta(t)$ at the $j$-th time stamp. We call $\Phi$ the {\em observation transform}.
Based on (\ref{SDE_theta}), and the fact that
\begin{equation}\label{SDE_u}
\mathrm{d}{u}_{t}=\left(\frac{1}{2}\Delta\Phi|_{\theta_{t}}+\nabla\Phi|_{\theta_{t}}a(\theta_{t})\right)\mathrm{d}t+\nabla\Phi|_{\theta_{t}}\mathrm{d}\omega_{t}
\end{equation}
by the Ito's formula, we obtain that
\begin{equation}\label{Cov vs Jacobian outer product}
\text{Cov}\left(\mathrm{d}{u}_{t}\right)=\nabla\Phi|_{\theta_{t}}\nabla\Phi|_{\theta_{t}}^\top
\end{equation}
since $(\frac{1}{2}\Delta\Phi|_{\theta_{t}}+\nabla\Phi|_{\theta_{t}}a(\theta_{t}))\mathrm{d}t$ is the drift.
By combining the above facts, 
it is shown in \cite[Section 3]{singer2008non} that when $u_i$ and $u_j$ are sufficiently close and the phase space is flat, we could recover the intrinsic distance between {${\theta}_i$ and ${\theta}_j$} by
\begin{equation}\label{EIG_dis}
\|{\theta}_i-{\theta}_j\|_{\mathbb{R}^{d}}^{2}=\frac{1}{2}(u_i-u_j)^{\top}
\big[C_i^{\dagger}+C_j^{\dagger}\big](u_i-u_j)+O(\|u_i-u_j\|^{4})\,,
\end{equation}
where ${C}_i=\nabla \Phi({\theta}_i)[\nabla \Phi({\theta}_i)]^{\top}$ is the covariance matrix associated with the observation process (i.e., the deformed Brownian motion). Furthermore, it is shown in \cite{singer2008non,talmon2013empirical} that ${C}_i$ can be estimated by the covariance matrix of $\{u_k\}_{k=i-L}^{i+L}$, where $L\in \mathbb{N}$ is chosen by the user. The key relevant fact here is that since the covariance matrix of $\mathrm d u_t$ is $\nabla\Phi|_{\theta_{t}}\nabla\Phi|_{\theta_{t}}^\top$, the related open set $O(u_t)$ is an ellipsoid with principle semi-axes described by the non-degenerate eigenvalues and eigenvectors of $\nabla\Phi|_{\theta_{t}}\nabla\Phi|_{\theta_{t}}^\top$.

To conclude, the intrinsic signal model in \eqref{SDE_theta} and the observation model \eqref{SDE_u} are generic and can describe a broad range of applications. 
Therefore, the ability to recover the distances between samples of the intrinsic process from observations in a nonparametric and unsupervised manner using an estimate of the Mahalanobis distance is very powerful.
Indeed, this model was used for system identification \cite{talmon2011parametrization}, molecular dynamics \cite{dsilva2013nonlinear}, sleep analysis \cite{wu2015assess,liu2018diffuse}, model reduction \cite{dsilva2016data}, speech processing \cite{dov2016kernel}, and gene expression data \cite{lahav2018mahalanobis}.

\subsection{Shrinkage Estimators}

For any $p$-by-$p$ matrix $M_n$ estimated from $y_1,\ldots,y_n$,
consider the estimator for \MD
\begin{equation} \label{M:eq}
    d^2_{M_n}(z, X) =  (z-\V{\mu})^\top  M_n (z-\V{\mu})\,.
\end{equation}
using Definition \ref{Definition:MD under linear spike}. The extension to Definition \ref{Definition:MD:manifoldModel} is straight-forward.
In order to quantitatively measure the performance of any \MD~estimator $d_{M_n}(z,X)$, it is useful 
to introduce a loss function.
For any estimator of the form \eqref{M:eq}, the 
absolute value of the estimation
error with respect to the true value \eqref{true:eq} is
\begin{align*}
&\Big| d^2_{\Sigma_X}(z, X) - d^2_{M_n}(z, X)) \Big | 
= \Big| (z-\V{\mu})^\top  [\Sigma_X^\dagger - M_n] (z - \V{\mu}) \Big|\,.
\end{align*}
As the test vector $z$ is arbitrary, it is natural to consider the {\em worst} case, and define
the loss of $M_n$ at the (unknown) underlying low-dimensional covariance
$\Sigma_X$:
\begin{defn}
The worst case loss function of an estimator of the form \eqref{M:eq} for \eqref{true:eq} is defined as 
\begin{align} \label{L:eq}
    L_n(M_n,\, \Sigma^\dagger_X) &:= \sup_{{z:\,\norm{z-\mu}_{\mathbb{R}^p}=1}}
	\left|(z-\mu)^\top  [\Sigma_X^\dagger - M_n] (z - \mu)\right| = \|\Sigma_X^\dagger - M_n\|_{\textmd{op}}\,,
\end{align}
where $\norm{\cdot}_{\textmd{op}}$ is the matrix operator norm. 
\end{defn}
It is also reasonable to consider the root mean squared estimation error of all possible test vectors. The discussion below follows the same line. 
To keep the notation light, the dependence of $L_n$ on $\mu$ and $\sigma$ as well as the dependence of $M_n$ on the sample $y_1,\ldots,y_n$ are implicit.

Consider matrices of the form $M^\eta_n:=\eta(S_n)$, where $\eta:[0,\infty)\to[0,\infty)$ and $S_n$ is the sample covariance. We call $M^\eta_n$ the {\em shrinkage estimator} of $\Sigma_X^\dagger$ with $\eta$. A typical example is the {\em classical \MD~estimator}, which is a shrinkage estimator with $\eta=\eta^\textup{classical}_\sigma$, where
\begin{equation}\label{eta_classic:eq}
	\eta_\sigma^{\textup{classical}}(\alpha) = \left\{ \begin{array}{cc}
		1/(\alpha-\sigma^2) & \alpha > \sigma^2 \\
		0 		& \alpha \le \sigma^2
	\end{array} \right..
\end{equation}
From \cite{mahalanobis1936generalized}, in the traditional setup when the dimension $p$ is fixed and $n \rightarrow \infty$, the classical \MD~
estimator obtains zero loss asymptotically.
\begin{thm}\label{thm1}
    Let $p$ be fixed independently of $n$.
    Then
\[
    \lim_{n\to\infty} L_n(\eta^\textup{classical}_\sigma(S_n),\, \Sigma^\dagger_X)=0\,.
\]
\end{thm}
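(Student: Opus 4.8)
The plan is to prove the stronger operator-norm statement $\eta^{\textup{classical}}_\sigma(S_n)\to\Sigma_X^\dagger$ almost surely, from which $L_n=\|\Sigma_X^\dagger-\eta^{\textup{classical}}_\sigma(S_n)\|_{\textup{op}}\to 0$ follows immediately. This rests on two ingredients: consistency of the sample covariance, and an exact identity for the classical shrinker at the population level. For the first, since $p$ is fixed and $Y$ is Gaussian with known mean $\V{\mu}$, the centered sample covariance $S_n$ is an average of $n$ i.i.d.\ matrices with finite moments, so the strong law of large numbers gives $S_n\to\Sigma_Y:=\Sigma_X+\sigma^2 I_p$ almost surely, entrywise and hence in operator norm (all norms being equivalent in a fixed dimension).

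Next I would record the population identity $\eta^{\textup{classical}}_\sigma(\Sigma_Y)=\Sigma_X^\dagger$. Diagonalize $\Sigma_X=U\Lambda U^\top$ with $\Lambda=\mathrm{diag}(\lambda_1,\dots,\lambda_d,0,\dots,0)$ and $\lambda_1\ge\cdots\ge\lambda_d>0$. Then $\Sigma_Y=U(\Lambda+\sigma^2 I_p)U^\top$ carries the $d$ eigenvalues $\lambda_i+\sigma^2>\sigma^2$ together with the eigenvalue $\sigma^2$ of multiplicity $p-d$. Applying $\eta^{\textup{classical}}_\sigma$ spectrally sends $\lambda_i+\sigma^2\mapsto 1/\lambda_i$ for $i\le d$, while each copy of the eigenvalue $\sigma^2$ is mapped to $0$ since it is not strictly larger than $\sigma^2$. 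This reproduces exactly $\Sigma_X^\dagger=U\,\mathrm{diag}(1/\lambda_1,\dots,1/\lambda_d,0,\dots,0)\,U^\top$, giving the claimed identity and explaining why the $\sigma^2$-subtraction in \eqref{eta_classic:eq} is the natural debiasing of the noise floor.

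It then remains to pass from $S_n\to\Sigma_Y$ to $\eta^{\textup{classical}}_\sigma(S_n)\to\eta^{\textup{classical}}_\sigma(\Sigma_Y)$. For the top $d$ eigenvalues this is routine: they are separated from $\sigma^2$ by the spectral gap $\lambda_d>0$, so for $n$ large all top sample eigenvalues lie in a region where $\eta^{\textup{classical}}_\sigma$ is Lipschitz, and Weyl's inequality together with the Davis--Kahan theorem control both the eigenvalues and their spectral projections. The hard part is the bottom cluster of $p-d$ sample eigenvalues, which accumulate exactly at the discontinuity $\alpha=\sigma^2$ of $\eta^{\textup{classical}}_\sigma$: a sample eigenvalue that exceeds $\sigma^2$ by a small amount $\varepsilon_n$ is mapped to the large value $1/\varepsilon_n$, which threatens to inflate the operator norm even though the corresponding direction carries no signal. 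The crux of the proof is therefore a fluctuation estimate for the smallest sample eigenvalues, showing that the contribution of this bottom cluster to $\eta^{\textup{classical}}_\sigma(S_n)$ is asymptotically negligible in operator norm; this is the step I expect to require genuine care, in contrast to the essentially algebraic content of the rest of the argument.
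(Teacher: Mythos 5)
Your first two steps are sound, and they are exactly what the paper's one-line proof implicitly uses: for fixed $p$, the strong law gives $S_n \to \Sigma_Y = \Sigma_X + \sigma^2 I_p$ a.s.\ in operator norm, and spectrally $\eta^{\textup{classical}}_\sigma(\Sigma_Y) = \Sigma_X^\dagger$. You are also right that the entire content of the theorem is the continuity of $M \mapsto \eta^{\textup{classical}}_\sigma(M)$ at $\Sigma_Y$, and that the danger sits in the $p-d$ bottom sample eigenvalues accumulating at the discontinuity $\alpha = \sigma^2$. But your plan stalls at precisely that point: you defer the crux to ``a fluctuation estimate showing the bottom cluster is negligible,'' and no such estimate exists. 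For fixed $p$, the classical CLT for sample covariance eigenvalues (Anderson) says that for the population eigenvalue $\sigma^2$ of multiplicity $p-d$, the rescaled fluctuations $\sqrt{n}\,(\lambda_{i,n} - \sigma^2)$, $d < i \le p$, converge jointly in distribution to the eigenvalues of a $(p-d)\times(p-d)$ GOE-type Gaussian matrix. Hence $\lambda_{d+1,n} > \sigma^2$ with asymptotic probability at least $1/2$ (close to one when $p-d$ is large), with excess of order $n^{-1/2}$; testing the quadratic form against the eigenvector $v_{d+1,n}$ then gives $L_n \ge \eta^{\textup{classical}}_\sigma(\lambda_{d+1,n}) - \|\Sigma_X^\dagger\|_{\textmd{op}} \asymp \sqrt{n}$. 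So the bottom-cluster contribution is not asymptotically negligible: it diverges with non-vanishing probability, and your stronger claim $\eta^{\textup{classical}}_\sigma(S_n) \to \Sigma_X^\dagger$ a.s.\ is false whenever $\sigma > 0$ and $d < p$.

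For comparison, the paper's proof is the one-liner ``$(S_n - \sigma^2 I_p) \to \Sigma_X$, substitute and take the limit,'' which silently commutes the limit with a discontinuous spectral map -- pseudo-inversion (and the shrinker \eqref{eta_classic:eq}, which behaves like it near $\sigma^2$) is discontinuous exactly when rank drops in the limit, which is the situation here. The statement is correct, and your Weyl/Davis--Kahan argument for the top block closes the proof cleanly, only under readings in which the spectrum of $\Sigma_Y$ stays bounded away from $\sigma^2$: the full-rank case $d = p$ (the classical setting of the cited 1936 paper), the noiseless case $\sigma = 0$ (where the bottom sample eigenvalues vanish identically), or a shrinker that thresholds at $\sigma^2 + \epsilon$ for fixed $\epsilon > 0$. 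So the verdict is: you located the genuine obstruction that the paper's proof glosses over -- a more perceptive diagnosis than the paper's -- but the proposal is not a proof; the step you flag as requiring care does not merely require care, it fails as stated, and the theorem needs an additional hypothesis for your (or any) argument to go through.
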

\begin{proof} 
Since it is well known that $(S_n-\sigma^2 I_p)\to\Sigma_X$ as $n \to \infty$, substituting $M_n$ with $\eta^\textup{classical}_{\sigma}(S_n)$ in \eqref{L:eq} and taking limit with $n \to \infty$ complete the proof. 
\end{proof}
%

When $p$ grows with $n$, such
that $p = p_n\to\infty$ with $p_n/n\to \beta>0$, the situation is quite different. 
It is known that in this situation
the sample covariance matrix is an inconsistent estimate of the population covariance matrix \cite{Johnstone2006}, and Theorem~\ref{thm1} might not hold; that is, the classical \MD~estimator might not be optimal. 
The following questions naturally arise when $\beta>0$:
\begin{enumerate}
    \item 
        Is there an optimal shrinkage (OS) estimator with respect to the loss $L_n$?
    \item 
        How does the {loss of the optimal shrinkage estimator} compare with the loss $L_n(\eta^\textup{classical}_\sigma(S_n),\, \Sigma_X^\dagger)$?
\end{enumerate}
In the sequel, we attempt to answer these questions.

\section{OPTIMAL RECOVERY OF {PRECISION MATRIX FOR MAHALANOBIS DISTANCE} {UNDER THE SPIKED MODEL}}

We start the derivation of the OS for \MD~under the linear spiked model{, which involves the OS for the precision matrix}. Its extension to the manifold model requires only an additional mild condition, which will be discussed in Section \ref{Sec:manifoldModelOptShr}. Without loss of generality, we set the noise level $\sigma = 1$ and will discuss the general case subsequently.

\begin{assum}[Asymptotic($\beta$)]\label{assum1}
    The number of variables $p = p_n$ grows with the number of observations $n$,
    such that $p/n \to \beta$ as $n \to \infty$, for $0 < \beta \leq 1$.
\end{assum}

\begin{assum}[Spiked model]\label{assum2}
Suppose $\Sigma_X = \Sigma_Y-\sigma^2 I_p$ with the eigendecompostion: 
\begin{equation}\label{SigmaX_low_rank:eq}
\Sigma_X = U \begin{bmatrix} \Sigma_d & 0 \\ 0 & 0_{p-d} \end{bmatrix} U^\top \in
    \mathbb{R}^{p \times p}\,,
\end{equation}
where $d\geq 0$, $\Sigma_d=\textup{diag}(\ell_1,\cdots \ell_d)$ is a $d \times d$ matrix whose diagonal consists of $d$ \emph{spikes} $\ell_1 > \cdots > \ell_d > 0$, which are fixed and independent of $p$ and $n$, and the off-diagonal elements are set to zero. For completeness, denote $\ell_{d+1}= \ldots= \ell_p = 0$. Note that we assume that all spikes are simple. When $d=0$, it is the null case.
\end{assum}

Denote the eigendecompostion of $S_n$ as
\begin{equation}\label{Sn_evd:eq}
S_n = V_n \textup{diag}({\lambda_{1,n}, \ldots, \lambda_{p,n}}) V_n^\top \in
    \mathbb{R}^{p \times p},
\end{equation}
where $\lambda_{1,n} \geq \ldots \lambda_{p,n} \geq 0$ are the empirical eigenvalues and $V_n \in O(p)$ is the matrix, whose columns are the empirical eigenvectors $v_{i,n}\in \mathbb{R}^p$, $i=1,\ldots,p$.
Under Assumption \ref{assum1} and Assumption \ref{assum2}, results collected from
\cite{0025-5734-1-4-A01, 2004math......3022B, BAIK20061382, Paul2007} imply three important facts about the sample covariance matrix $S_n$.
\begin{enumerate}

\item 
{\em Eigenvalue spread.}
Suppose Assumption~\ref{assum1} holds and consider the null case where $\Sigma_d = 0$. As $n \to \infty$, the spread of the empirical eigenvalues $\lambda_{i,n}$ converges to a continuous distribution called the ``Marcenko-Pastur'' law \cite{0025-5734-1-4-A01}, 
\begin{equation}\label{eq:mp}
\frac{\sqrt{(\lambda_+-x)(x-\lambda_-)}}{2\pi\beta x}\mathbf{1}_{[\lambda_-, \lambda_+]}dx\,,
\end{equation}
where $\lambda_+ = (1+\sqrt{\beta})^2$ and $\lambda_- = (1-\sqrt{\beta})^2$ are the {\em limiting bulk edges}.

\item
{\em Top eigenvalue bias.}
Suppose Assumption~\ref{assum1} and Assumption \ref{assum2} hold.  For $1 \leq i \leq d$, the empirical eigenvalues 
\[
\lambda_{i,n} \xrightarrow{a.s.}\lambda(\ell_i) =: \lambda_i
\] 
as $n \to \infty$ , where
\begin{equation}\label{lambda:eq }
\lambda(\alpha) = \begin{cases}
				1+\alpha+\beta+\frac{\beta}{\alpha} &  \alpha>\ell_+\\
			 							 (1+\sqrt{\beta})^2  &  0 \leq \alpha \leq \ell_+
			 						\end{cases}
\end{equation}
is defined on $\alpha \in [0,\infty)$ and $\ell_+:=\sqrt{\beta}$. For $d+1\leq i \leq p$, since $\ell_i = 0$ the empirical eigenvalues $\lambda_{i,n}$ follow the Marcenko-Pastur law \eqref{eq:mp}.  

\item
{\em Top eigenvector inconsistency.}
Suppose Assumption~\ref{assum1} and Assumption \ref{assum2} hold. Let $c_{i,n}$ and $s_{i,n}$ be the cosine and sine values of the angle between the $i$-th population eigenvector and the $i$-th empirical eigenvector after properly adjusting the sign of each empirical eigenvector. Note that there exists a sequence of $R_{n} \in O(p)$ so that $R_{n}V_{n}$ converges almost surely (a.s.) to $V\in O(p)$. In the following we assume that the empirical eigenvectors have been properly rotated. It is known that when $n \to \infty$, $c_{i,n} \xrightarrow{a.s.} c(\ell_i)$ and $s_{i,n} \xrightarrow{a.s.} s(\ell_i)$, where
\begin{equation}\label{cosine:eq}
    c(\alpha) :=
    \begin{cases}
        \sqrt{\frac{\alpha^2-\beta}{\alpha^2+\beta\alpha}} & \alpha>\ell_+ \\
        0 & 0 \leq \alpha\leq \ell_+\,,
    \end{cases}
\end{equation}
and
\begin{equation}\label{sine:eq}
s(\alpha):=\sqrt{1-c^2(\alpha)}\,
\end{equation}
are defined on $\alpha \in [0,\infty)$.

\end{enumerate}

The above three properties imply that the classical estimator $\eta_\sigma^{\textup{classical}}(S_n)$ may not be the best estimator in general, and for the purpose of estimating \MD~in particular. Inspired by \cite{2013arXiv1311.0851D}, we may ``correct'' the bias of the eigenvalues to improve the estimation. 
\begin{defn}
The {\em asymptotic loss function} is defined as
\begin{equation}\label{loss_infty:eq}
L_\infty(\eta|\ell_1, \ldots, \ell_d) := \lim_{n \to \infty} L_n(M_n^\eta,\,  \Sigma_X^\dagger)\,, 
\end{equation}
assuming the limit exists. 
\end{defn}
To find a shrinkage estimator $\eta$ that minimizes $L_{\infty}(\eta|\ell_1, \ldots, \ell_d)$, it is natural to construct the estimator by recovering the spikes $\ell_i$ using the biased eigenvalues $\lambda_i$. From the inversion of \eqref{lambda:eq }, recalling that $\ell_{+}=\sqrt{\beta}$, we can define 
\begin{equation}\label{ell:eq}
    \ell(\alpha) :=
    \frac{\alpha+1-\beta+\sqrt{(\alpha+1-\beta)^2-4\alpha}}{2} - 1 
\end{equation} 
when $\alpha>\lambda_+$, and consider the shrinkage function  
\begin{equation}\label{eta_inv:definition}
\eta^{\textup{inv}}(\alpha) = 
    \begin{cases}
        1/\ell(\alpha) & \alpha>\lambda_+ \\
        0 & \textrm{otherwise}.
    \end{cases}
\end{equation}
{Note that since taking inverse of a matrix is nonlinear, it is not clear if the above naive idea will lead to the OS estimator of the precision matrix for the \MD~estimate. Nevertheless,}
it is reasonable to expect the existence of an optimal shrinkage function $\eta^*$ satisfying 
\begin{equation*}
L_\infty(\eta^*|\ell_1, \ldots, \ell_d) \leq L_\infty(\eta^{\textup{inv}}|\ell_1, \ldots, \ell_d)
\end{equation*}
for any spikes $\ell_1, \ldots, \ell_d$. Below we show that {this naive idea,} $\eta^{\textup{inv}}$, is in fact the {OS estimator for our purpose}. 
 
\subsection{Derivation of the Optimal Shrinker when $\sigma=1$}\label{Section:OS:linearSpike}

\begin{defn}
A function $\eta: [0, \infty) \to [0, \infty)$ is called a {\em shrinker} if it is continuous when $\lambda>\lambda_+$, and $\eta(\lambda) = 0$ when $0\leq \lambda \leq \lambda_+$. 
\end{defn}
Note that this shrinker is a {\em bulk shrinker} considered in \cite[Definition 3]{2013arXiv1311.0851D}. Based on the assumption of a shrinker $\eta$, the associated shrinkage estimator converges almost surely, that is      
\begin{equation}
M^\eta_n  \xrightarrow{a.s.} M^\eta := V  
\textup{diag}({\eta(\lambda_1), \ldots, \eta(\lambda_p)})  V^\top\,, 
\end{equation}
where the right hand side is the eigendecomposition of $M^\eta$.
Thus, the sequence of loss functions also almost surely converges as
\begin{equation}\label{loss_conv:eq}
    L_n(M^\eta_n,\,  \Sigma_X^\dagger) = \|\Sigma_X^\dagger - M^\eta_n\|_{op} \xrightarrow{a.s.} \|\Sigma_X^\dagger - M^\eta\|_{op}\,.
\end{equation}
As a result, the limit in \eqref{loss_infty:eq} exists when $\eta$ is a shrinker, and we have the following theorem which in turn gives rise to the optimal shrinker. {Note that while the biased eigenvalues could be recovered by the quadratic relationship between the biased eigenvalues and the population eigenvalues \eqref{lambda:eq }, true (population) eigenvectors and empirical eigenvectors are not collinear \cite{2013arXiv1311.0851D} and so far we do not have a way to recover the biased eigenvectors. The main idea beyond the proof is respecting this fact, and when we find the optimal way to correct the eigenvalues, the biased eigenvectors should be taken into account. With the control of this eigenvector bias, the OS will be derived.}

\begin{thm}[Characterization of the asymptotic loss]\label{thm2}
Suppose $\sigma = 1$. Consider the spiked covariance model satisfying Assumption~\ref{assum1} and Assumption~\ref{assum2} and a shrinkage function $\eta: [0, \infty) \to [0, \infty)$. We have a.s.
\begin{equation}\label{thm2_prf1}
        L_\infty(\eta|\ell_1, \ldots, \ell_d)=
        \max_{i=1,\ldots,p} \{ \Delta(\ell_i,\eta(\lambda_i)) \} \,,
\end{equation}
where $\Delta:[0,\infty)\times [0,\infty)\to [0,\infty)$ is given by
\begin{equation}\label{loss_case:eq}
        \Delta(\alpha,\zeta)=
        \begin{cases}        
       	u_+(\alpha, \zeta)
             & \alpha>\ell_+ \mbox{ and } \zeta \leq \frac{1}{\alpha} \\
	  	-u_-(\alpha, \zeta)            
            & \alpha>\ell_+ \mbox{ and } \zeta > \frac{1}{\alpha} \\
           1/\alpha & 0 < \alpha\leq \ell_+ \\
            0 & \alpha = 0\,,
        \end{cases}
\end{equation}
where
\begin{equation}\label{u_+}
     u_+(\alpha, \zeta) = \frac{1}{2}\left( \frac{1}{\alpha}-\zeta + 
                \sqrt{
                    \Big(\frac{1}{\alpha}-\zeta\Big)^2+4\frac{\zeta s(\alpha)^2}{\alpha}
                }             
            \right)\,, 
\end{equation}

\begin{equation}\label{u_-}
      u_-(\alpha, \zeta) = \frac{1}{2}\left( \frac{1}{\alpha}-\zeta -
                \sqrt{
                    \Big(\frac{1}{\alpha}-\zeta\Big)^2+4\frac{\zeta s(\alpha)^2}{\alpha}
                }              
            \right) \,.
\end{equation}               
\end{thm}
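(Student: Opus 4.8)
The plan is to pass to the almost-sure limit and then reduce the operator-norm computation to a direct sum of blocks of size at most two, one per spike. \textbf{Step 1 (reduction to the limit).} By the a.s.\ convergence $M_n^\eta \xrightarrow{a.s.} M^\eta$ and \eqref{loss_conv:eq}, it suffices to evaluate $\|\Sigma_X^\dagger - M^\eta\|_{\mathrm{op}}$ for the deterministic limiting matrices $\Sigma_X^\dagger = \sum_{i:\ell_i>0} \ell_i^{-1} u_i u_i^\top$ and $M^\eta = \sum_{i=1}^p \eta(\lambda_i)\, v_i v_i^\top$, where $u_i$ are the columns of $U$ and $v_i$ the columns of the limiting $V$.

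\textbf{Step 2 (geometry of the limiting eigenvectors).} Because $\eta$ is a shrinker, $\eta(\lambda_i)=0$ whenever $\lambda_i\le\lambda_+$; by the top-eigenvalue-bias fact this annihilates every bulk index $i>d$ and every sub-critical spike $0<\ell_i\le\ell_+$. For each super-critical spike $\ell_i>\ell_+$, the eigenvector-inconsistency fact gives $\langle v_i,u_i\rangle\to c(\ell_i)$; combined with the vanishing of the off-diagonal alignments $\langle v_i,u_j\rangle\to 0$ for $j\neq i$, we may write $v_i = c(\ell_i)\,u_i + s(\ell_i)\,w_i$ with a unit vector $w_i$ orthogonal to the entire signal span $\mathrm{span}(u_1,\dots,u_d)$. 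Since the $v_i$ are mutually orthogonal, and $u_i\perp u_j$, $u_i\perp w_j$, orthogonality forces $s(\ell_i)s(\ell_j)\langle w_i,w_j\rangle=0$; as $s>0$ for super-critical spikes, the corresponding $w_i$ are mutually orthogonal as well.

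\textbf{Step 3 (block-diagonalization and block spectra).} Completing $\{u_1,\dots,u_d\}\cup\{w_i:\ell_i>\ell_+\}$ to an orthonormal basis of $\R^p$, the matrix $\Sigma_X^\dagger - M^\eta$ becomes block-diagonal: a $2\times2$ block on $\mathrm{span}(u_i,w_i)$ for each super-critical spike, a $1\times1$ block $[\,\ell_i^{-1}\,]$ on $u_i$ for each sub-critical spike (since $M^\eta$ has no component along such $u_i$), and the zero operator on the remaining noise directions. Writing $\zeta=\eta(\lambda_i)$, $s=s(\ell_i)$, $c=c(\ell_i)$, the super-critical block is $\left[\begin{smallmatrix} \ell_i^{-1}-\zeta c^2 & -\zeta cs \\ -\zeta cs & -\zeta s^2\end{smallmatrix}\right]$, with trace $\ell_i^{-1}-\zeta$ and determinant $-\zeta s^2/\ell_i$, so its eigenvalues are exactly $u_+(\ell_i,\zeta)\ge 0$ and $u_-(\ell_i,\zeta)\le 0$ from \eqref{u_+}--\eqref{u_-}. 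Its operator norm is therefore $\max(u_+,-u_-)$, and since $u_++u_-=\ell_i^{-1}-\zeta$, this maximum equals $u_+$ precisely when $\zeta\le\ell_i^{-1}$ and $-u_-$ otherwise, reproducing the first two cases of $\Delta$; the sub-critical blocks contribute $\ell_i^{-1}$ (third case) and the noise directions contribute $0$ (fourth case). As the operator norm of a block-diagonal matrix is the maximum of the block norms, taking the maximum over all $i$ yields \eqref{thm2_prf1}.

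\textbf{Main obstacle.} The one ingredient that goes beyond the three quoted facts is the vanishing of the off-diagonal alignments $\langle v_i,u_j\rangle\to 0$ for $i\neq j$ in Step 2, which is what guarantees that the non-$u_i$ part of each limiting eigenvector lies in the noise subspace, and hence that the vectors $w_i$ are mutually orthogonal and the global block-diagonalization of Step 3 is exact rather than merely approximate. This delocalization is standard for simple spikes in the spiked model, but it is the load-bearing geometric input; once it is established, the rest is the routine $2\times2$ eigenvalue and sign analysis carried out above.
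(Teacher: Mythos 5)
Your proof is correct and follows essentially the same route as the paper: the same reduction to per-spike $2\times 2$ blocks $\left[\begin{smallmatrix}1/\ell_i & 0\\ 0 & 0\end{smallmatrix}\right]-\eta(\lambda_i)\left[\begin{smallmatrix}c^2 & cs\\ cs & s^2\end{smallmatrix}\right]$, the same trace/determinant computation yielding the eigenvalues $u_\pm(\ell_i,\eta(\lambda_i))$, and the same sign analysis via $u_++u_-=1/\ell_i-\eta(\lambda_i)$, with the only difference being that you derive the block-diagonalization by hand after passing to the limit, whereas the paper invokes the simultaneous block-diagonalization, orthogonal invariance, and max-decomposability lemmas of \cite{2013arXiv1311.0851D} pre-limit and then lets $n\to\infty$. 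The off-diagonal delocalization $\langle v_i,u_j\rangle\to 0$ for $j\neq i$ that you flag as the load-bearing geometric input is precisely what that cited machinery supplies (resting on \cite{Paul2007}, applicable here since all spikes are assumed simple), so it is a point to be handled by citation rather than a genuine gap.
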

 
\begin{proof}
Based on the property of ``simultaneous block-diagonalization'' for $\Sigma_X^{\dagger}$ and $M^\eta_n$ in \cite[Section 2]{2013arXiv1311.0851D},  the properties of ``orthogonal invariance'' and ``max-decomposability'' for the operator norm in \cite[Section 3]{2013arXiv1311.0851D}, and the convergence of $c_{i,n}$ and $s_{i,n}$ in \eqref{cosine:eq} and \eqref{loss_conv:eq}, we have 
\begin{equation*}
 L_n(M^\eta_n,\,\Sigma_X^\dagger) =  \max_{i} \norm{A_i - B_{i,n}}_{\textmd{op}}  \,,
\end{equation*}
where 
$$
A_i =  \begin{bmatrix} 1/\ell_i & 0 \\ 0 & 0 \end{bmatrix}
$$ when $\ell_i\neq 0$ and $A_i=0_{2\times 2}$ otherwise, and
$$
B_{i,n} = \eta(\lambda_{i,n}) \begin{bmatrix} c_{i,n}^2 & c_{i,n}s_{i,n} \\
c_{i,n}s_{i,n} & s_{i,n}^2\end{bmatrix}\,.
$$ 
When $n\to \infty$, the loss converges a.s. to $\max_{i} \norm{A_i - B_i}_{\textmd{op}}$, where
$$
B_i = \eta(\lambda_i) \begin{bmatrix} c(\ell_i)^2 & c(\ell_i)s(\ell_i) \\
c(\ell_i)s(\ell_i) & s(\ell_i)^2\end{bmatrix} \,.
$$ Now we evaluate $\norm{A_i - B_i}_{\textmd{op}}$ for different $\ell_i$. 

When $\ell_i >\ell_+$, denote the eigenvalues of $A_i - B_i$ as $u_+(\ell_i,\eta(\lambda_i))$ and $u_-(\ell_i, \eta(\lambda_i))$. If $\eta(\lambda_i) > 1/\ell_i$ we have $0\leq u_+(\ell_i,\eta(\lambda_i)) \leq -u_-(\ell_i,\eta(\lambda_i))$, and hence $\norm{A_i-B_i}_{op} = -u_-(\ell_i,\eta(\lambda_i))$; otherwise, we have $u_+(\ell_i,\eta(\lambda_i)) \geq -u_-(\ell_i,\eta(\lambda_i))\geq 0$, and hence $\norm{A_i-B_i}_{op} = u_+(\ell_i,\eta(\lambda_i))$.
For $0 < \ell_i \leq \ell_+$, since $c(\ell_i) = 0 $, we have  
$$
B_i = \begin{bmatrix} 0 & 0 \\  0 &  \eta(\lambda_i) \end{bmatrix}\,,$$
which equals $0_{2\times 2}$ since $\eta(\lambda_i)=0$ by the definition of shrinkage function. Thus, $\norm{A_i - B_i}_{op} = 1/\ell_i$. 
Finally, for $\ell_i = 0$, $A_i$ is a $2\times 2$ zero matrix, and thus
$\norm{A_i - B_i}_{op} = \eta(\lambda_i)=0$.  This concludes the proof.
\end{proof}

\begin{figure}[htb!]
\centering
\includegraphics[trim=20 200 20 230, clip, width=0.5\textwidth]{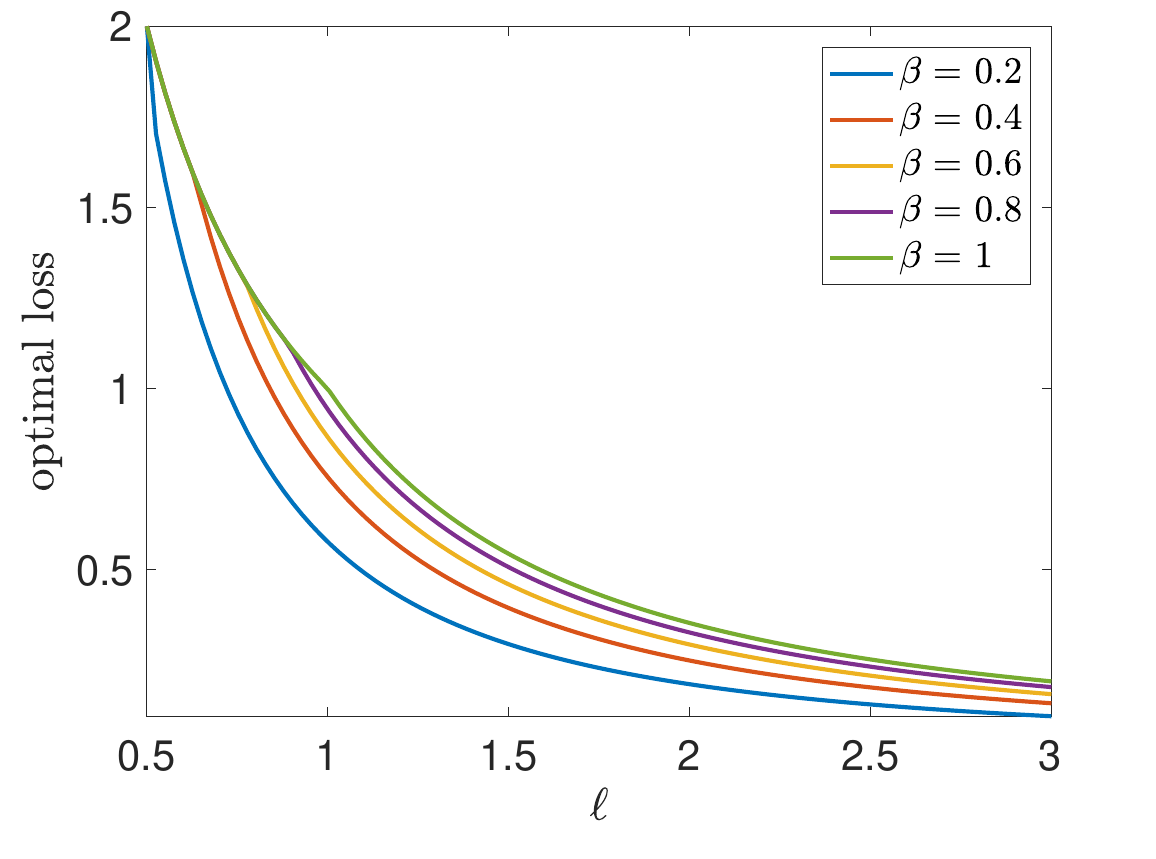}
\caption{The asymptotic loss for several $\beta = p/n$ values as a function of the spike strength in a single spike model, i.e., 
$\ell_1 \in [0.1,3]$, and $\ell_2 = \ell_3 = \ldots = \ell_d = 0$.
The noise level is fixed and set to $\sigma =1$.}
\label{fig:asym_loss}
\end{figure}

Figure \ref{fig:asym_loss} illustrates the obtained asymptotic loss for several $\beta = p/n$ values as a function of the spike strength in a single spike model.
It is clear that for each $\beta$, there is a transition at $\ell_+=\sqrt{\beta}$.
An immediate consequence of Theorem \ref{thm2} is that $\eta^{\textup{inv}}$ is an optimal shrinker.

\begin{cor}\label{Corollary:1}
Suppose $\sigma=1$ and Assumption~\ref{assum1} and Assumption~\ref{assum2} hold. Define the asymptotically optimal shrinkage function as   
\begin{equation}\label{define_op_shrinker}
\eta^* := \underset{\eta}{\arg\min} \{ L_\infty(\eta|\ell_1, \ldots, \ell_d)\}\,,
\end{equation}
where argmin is evaluated on the set of all possible shrinkage functions. Then, $\eta^*$ is unique and equals $\eta^{\textup{inv}}$ given in (\ref{eta_inv:definition}). Moreover, its associated loss is
\begin{equation}\label{lemma1:eq1}
    \max_i\left\{ \Delta(\ell(\lambda_i)) \right\}\,,
\end{equation}
where
\begin{equation}\label{op_loss_cases:eq}
    \Delta(\alpha) = \begin{cases}
        s(\alpha)/\alpha = \frac{\sqrt{\beta}}{\alpha^{3/2}}\sqrt{\frac{1+\alpha}{\beta+\alpha}} &
        \alpha>\ell_+ \\
        1/\alpha & 0<\alpha\leq \ell_+ \\
        0 & \alpha = 0\,.
    \end{cases}
\end{equation}
\end{cor}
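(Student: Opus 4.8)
The plan is to build directly on Theorem~\ref{thm2}, which already expresses the asymptotic loss as $L_\infty(\eta|\ell_1,\ldots,\ell_d)=\max_i \Delta(\ell_i,\eta(\lambda_i))$. The crucial structural observation is that the $i$-th term depends on the shrinker $\eta$ \emph{only} through its value $\zeta:=\eta(\lambda_i)$ at the single point $\lambda_i$, and that the spikes are simple (Assumption~\ref{assum2}), so that $\lambda_1,\ldots,\lambda_d$ are distinct. Hence the values $\eta(\lambda_1),\ldots,\eta(\lambda_d)$ can be chosen independently, and minimizing the maximum reduces to minimizing each map $\zeta\mapsto\Delta(\ell_i,\zeta)$ separately over $\zeta\in[0,\infty)$. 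First I would isolate the only nontrivial case, a supercritical spike $\ell_i>\ell_+$: for $0<\ell_i\le\ell_+$ one has $\Delta(\ell_i,\zeta)=1/\ell_i$ independent of $\zeta$ (the signal is already lost), and for $\ell_i=0$ the term vanishes for any shrinker.

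For a fixed supercritical $\alpha=\ell_i>\ell_+$ I would study $g(\zeta):=\Delta(\alpha,\zeta)$, recalling from the proof of Theorem~\ref{thm2} that $u_+(\alpha,\zeta)$ and $u_-(\alpha,\zeta)$ are the eigenvalues of $A_i-B_i$, with trace $\tfrac1\alpha-\zeta$ and determinant $-\zeta\, s(\alpha)^2/\alpha\le 0$. Nonpositivity of the determinant gives $u_+\ge 0\ge u_-$, so the operator norm equals $u_+$ exactly when $\zeta\le 1/\alpha$ (nonnegative trace) and $-u_-$ when $\zeta>1/\alpha$, reproducing the branch structure of \eqref{loss_case:eq}. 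I would then show $g$ is strictly decreasing on $[0,1/\alpha]$ and strictly increasing on $[1/\alpha,\infty)$ by differentiating $u_\pm$ in $\zeta$; after squaring, the required sign reduces to the elementary inequality $-4\,s(\alpha)^2 c(\alpha)^2/\alpha^2\le 0$, which is strict since $0<c(\alpha),s(\alpha)<1$ for $\alpha>\ell_+$. Thus $g$ has its \emph{unique} minimum at $\zeta=1/\alpha$, where the trace vanishes and $u_+=-u_-=s(\alpha)/\alpha$, giving $\min_\zeta\Delta(\alpha,\zeta)=s(\alpha)/\alpha=\Delta(\alpha)$ as in \eqref{op_loss_cases:eq}.

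It remains to identify the minimizer with $\eta^{\textup{inv}}$. At a supercritical spike the optimal value $\zeta=1/\ell_i$ coincides with $\eta^{\textup{inv}}(\lambda_i)=1/\ell(\lambda_i)=1/\ell_i$, since inverting the top-eigenvalue map \eqref{lambda:eq } yields $\ell(\lambda_i)=\ell_i$ and $\ell_i>\ell_+$ forces $\lambda_i>\lambda_+$ because $\ell_i+\beta/\ell_i-2\sqrt\beta=(\sqrt{\ell_i}-\sqrt\beta/\sqrt{\ell_i})^2>0$; at subcritical and null indices $\eta^{\textup{inv}}(\lambda_i)=0$ matches the (irrelevant) per-term minimizer. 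Hence $\eta^{\textup{inv}}$ simultaneously minimizes every term, so $L_\infty(\eta^{\textup{inv}}|\ell_1,\ldots,\ell_d)=\max_i\min_\zeta\Delta(\ell_i,\zeta)=\max_i\Delta(\ell(\lambda_i))$, which is exactly \eqref{lemma1:eq1}, and no shrinker can do better. For uniqueness I would invoke that the per-spike minimizer is unique at each supercritical location together with the ``for any spikes'' requirement: placing a single spike of arbitrary strength $\alpha>\ell_+$ forces $\eta(\lambda(\alpha))=1/\alpha$, pinning $\eta$ to $\eta^{\textup{inv}}$ on $(\lambda_+,\infty)$, while the shrinker constraint fixes $\eta\equiv 0$ on $[0,\lambda_+]$.

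I expect the main obstacle to be the strict monotonicity analysis certifying that $\zeta=1/\alpha$ is the \emph{unique} minimizer of $\Delta(\alpha,\cdot)$, since this is precisely what upgrades optimality of $\eta^{\textup{inv}}$ to uniqueness; the branch switch of the operator norm at $\zeta=1/\alpha$ must be treated carefully so that the one-sided derivatives carry the correct signs. A secondary subtlety is the exact sense of the uniqueness claim: for a \emph{single} fixed configuration only the values of $\eta$ at $\lambda_1,\ldots,\lambda_d$ are determined, so the statement is best read as uniqueness among shrinkers that are optimal uniformly over spike configurations.
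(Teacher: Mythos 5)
Your proposal is correct and follows essentially the same route as the paper's proof: reduce via Theorem~\ref{thm2} to minimizing $\zeta\mapsto\Delta(\alpha,\zeta)$ spike-by-spike (the paper phrases this through the max-min inequality, you through the independence of the values $\eta(\lambda_i)$ at distinct $\lambda_i$), show this map decreases on $[0,1/\alpha]$ and increases on $[1/\alpha,\infty)$ so that $\zeta=1/\alpha$ is the unique minimizer with value $s(\alpha)/\alpha$, and identify the resulting shrinker with $\eta^{\textup{inv}}$. If anything your write-up is more careful than the paper's at the two delicate points: your trace/determinant computation, reducing the derivative signs to $-4s(\alpha)^2c(\alpha)^2/\alpha^2<0$, gets the monotonicity right where the paper's displayed derivatives contain sign/typo errors (it asserts $\partial_\eta\Delta\geq 0$ on $[0,1/\alpha]$ while calling the loss decreasing there, writes $4s(\alpha)/\alpha$ for $4s(\alpha)^2/\alpha$, and writes $\lambda_i>\ell_+$ where $\lambda_i>\lambda_+$ is meant, a step you verify explicitly via $\ell_i+\beta/\ell_i-2\sqrt{\beta}>0$), and your reading of uniqueness as holding among shrinkers optimal uniformly over spike configurations makes precise a point the paper leaves implicit.
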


Note that this result coincides with the findings reported in \cite{2013arXiv1311.0851D}. Precisely, it is shown in \cite[(1.12)]{2013arXiv1311.0851D} that for the operator norm, $\ell(\alpha)$ \eqref{ell:eq} is the optimal shrinkage for the covariance matrix and precision matrix. In this corollary, we show that for the Mahalanobis distance, which is related to the precision matrix, the optimal estimator is also achieved by the optimal shrinkage, taking $\ell(\alpha)$ into account. 

\begin{proof}
Based on Theorem \ref{thm2}, the optimal shrinker $\eta^*$ leads to $\underset{\eta\geq 0}{\min} \underset{i=1,\ldots,d}{\max}\{\Delta(\ell_i,\eta(\lambda_i))\}$. Note that for $j=\underset{i=1,\ldots,d}{\arg\max}\{\Delta(\ell_i,\eta(\lambda_i))\}$, the optimal shrinker achieves $\underset{\eta\geq 0}{\min}\{\Delta(\ell_j,\eta(\lambda_j))\}$.
Thus, by the same argument in \cite{2013arXiv1311.0851D}, if we could solve $\underset{\eta\geq0}{\arg\min}\{\Delta(\alpha,\eta(\lambda(\alpha)))\}$ for any $\alpha>0$, we find the optimal shrinker. To simplify the notation, we abbreviate $\eta(\lambda(\alpha))$ by $\eta$.

For  $\alpha > \ell_+$ and $\eta > \frac{1}{\alpha}$, we have $\Delta(\alpha,\eta) = -u_-(\alpha,\eta)$. By a direct calculation, we get 
\[
\partial_\eta \Delta(\alpha,\eta)  =\frac{1}{2}+\frac{-(\frac{1}{\alpha}-\eta)+\frac{2s(\alpha)}{\alpha}}{\sqrt{(\frac{1}{\alpha}-\eta)^2+4\frac{\eta s(\alpha)^2}{\alpha}}} >0. 
\]
For  $\alpha > \ell_+$ and $0\leq \eta \leq \frac{1}{\alpha}$, we have $\Delta(\alpha,\eta) =u_+(\alpha,\eta)$, and similarly by taking the derivative of \eqref{u_+} we have 
\[
\partial_\eta \Delta(\alpha,\eta)  =-\frac{1}{2}+\frac{-(\frac{1}{\alpha}-\eta)+\frac{2s(\alpha)}{\alpha}}{2\sqrt{(\frac{1}{\alpha}-\eta)^2+4\frac{\eta s(\alpha)^2}{\alpha}}} \geq 0. 
\]
As a result, the partial derivative of the loss function is decreasing when $0\leq \eta\leq 1/\alpha$ and increasing when $\eta>1/\alpha$ with a discontinuity at $\eta = 1/\alpha$ while the loss function is continuous. Thus, the loss function reaches the minimum when $\eta = 1/\alpha$. These facts imply that $\eta^*(\lambda_i) = 1/\ell(\lambda_i)$ when $\lambda_i > \ell_+$. Furthermore, by substituting $\eta$ with $\eta^*$ in \eqref{u_+} or \eqref{u_-}, we get $\Delta(\alpha) = s(\alpha)/\alpha$. 
By definition, $\eta^* = 0$ when $0 \leq \alpha \leq \ell_+$. 
Thus, for $0 <\alpha \leq \ell_+ $, $\Delta(\alpha) = 1/\alpha$,
and for $\ell = 0$, $\Delta(\ell) = 0$. 
Finally, it is clear that $\eta^*$ is continuous when $\alpha>\lambda_+$, and $\eta(\alpha) = 0$ when $0\leq \alpha \leq \lambda_+$.
We thus conclude that $\eta^*$ is the optimal shrinker.
\end{proof}

We compare our result with another naive approach; that is, obtaining the covariance by the optimal shrinkage with respect to the operator norm, and then taking the Moore-Penrose pseudo-inverse. Let $\eta^{cov}$ denote the optimal shrinker for the covariance matrix recovery obtained from \cite{2013arXiv1311.0851D} with respect to the operator norm loss function. With the same notation, that is, $\Sigma_Y = \Sigma_X+\sigma^2 I_p$ and $\sigma = 1$ as in \eqref{assum2}, the optimal shrinker satisfies 
\begin{equation}
\eta^{cov}(\alpha) = 
    \begin{cases}
        \ell(\alpha)+1 & \alpha>\lambda_+ \\
        1 & \textrm{otherwise}\,,
    \end{cases}
\end{equation}
where $\ell$ is defined in \eqref{ell:eq}.
Note that in \cite{2013arXiv1311.0851D}, the authors aimed to recover $\Sigma_Y$, while we recover $\Sigma_X^{\dagger}$. In other words, the authors in \cite[(1.12)]{2013arXiv1311.0851D} showed that when the operator norm is considered, the OS estimator is the same as correcting the eigenvalues according to the relationship \eqref{lambda:eq }.
Thus, our result $\eta^*$ coincides with the inverse of $\eta^{cov}-1$ when $\alpha>\lambda_+$.

We note the following interesting phenomenon stemming from Theorem \ref{thm2} {and Corollary \ref{Corollary:1}}. If there exists a nontrivial spike $\ell_i>0$ that is weak enough so that
{$\ell_i$ is sufficiently small compared with $\ell_+$}, then $L_\infty(\eta|\ell_1, \ldots, \ell_d)$ is dominated by
$1/\ell_i$. Consequently, in this large $p$ large $n$ regime, we cannot ``rescue'' this spike, and the associated signal is lost in the noise, {as can be seen in Corollary \ref{Corollary:1}.} 

Figure \ref{fig:shrinker} illustrates the obtained optimal shrinker with the classical shrinker overlay, for $\beta = p/n = 1$ and $\sigma=1$.
Clearly, compared with the classical shrinker, the obtained optimal shrinker truncates the eigenvalues more aggressively.

\begin{figure}[t]
\includegraphics[width=0.95\textwidth]{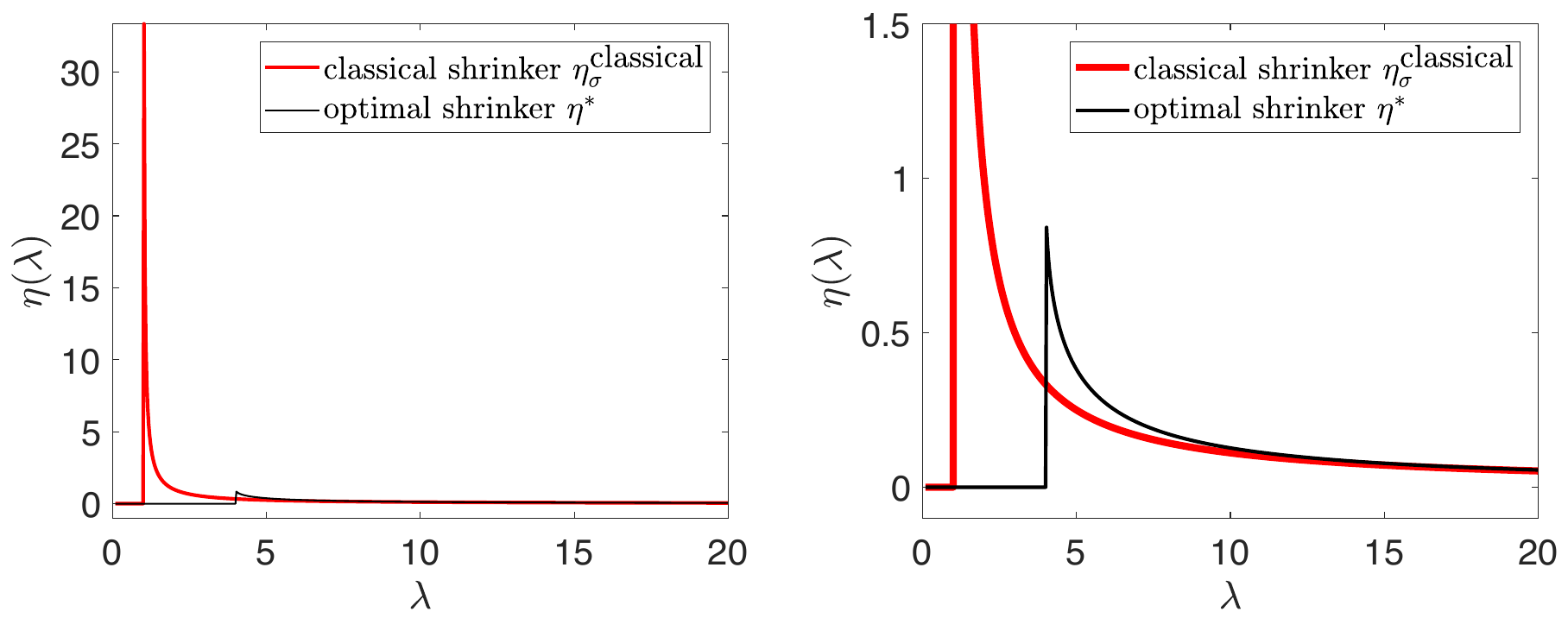}
\caption{The obtained optimal shrinker with the classical shrinker overlay, for $\beta = p/n = 1$ and $\sigma=1$. To enhance the visualization, the y-axis of the figure is truncated at $1.5$ on the right hand side.}
\label{fig:shrinker}
\end{figure}

\subsection{Derivation of the Optimal Shrinker when $\sigma\neq 1$}\label{Section:OS:sigmanot1}

To handle the general case when $\sigma \neq1$, we first rescale the data and model by setting $\ell'_i := \ell_i/\sigma^2$ and $\lambda'_{i,n} := \lambda_{i,n}/\sigma^2$, and consider the following shrinker defined on $[0,\infty)$:
\begin{equation}
\eta_\sigma(\alpha):= \frac{\eta(\alpha/\sigma^2)}{\sigma^2}\,.
\end{equation}
Note that since $\eta$ plays the role of estimating the precision matrix, we re-normalize it by dividing $\eta(\alpha/\sigma^2)$ by $\sigma^2$.
The shrinkage estimator for $\Sigma_{X}^\dagger$ becomes $M^{\eta_\sigma}_n := \eta_\sigma(S_n)$, the general optimal shrinker becomes
\begin{equation}\label{general_sh:eq}
        \tilde{\eta}^*(\alpha) = \begin{cases}
        	\frac{1}{\sigma^2 \ell(\alpha/\sigma^2)} & \alpha > \sigma^2\ell_+ \\
        	0 & 0\leq \alpha \leq \sigma^2\ell_+
        \end{cases}\,
\end{equation}
and the associated loss is
\[
    \max_i\left\{ \frac{\Delta(\ell(\frac{\lambda_i}{\sigma^2}))}{\sigma^2} \right\}\,.
\]

\section{EXTENSION TO THE MANIFOLD MODEL}\label{Sec:manifoldModelOptShr}

We now {come back to the manifold learning problem with the manifold setup described in Section \ref{Section:ManifoldModel}. We argue that despite the challenge mentioned in Section \ref{sec:precision},} the developed theorem in Section \ref{Section:OS:linearSpike} can be {extended} to study the manifold model in the large $p$ large $n$ setup {with proper modifications}. 
Note that For each $n$, there exists an orthonormal basis $\{u_{n,l}\}_{l=1}^p$ of $\mathbb{R}^p$ so that the $d$-dimensional compact smooth manifold $M$ is {isometrically} embedded in the subspace spanned by $\{u_{n,l}\}_{l=1}^K$ for a fixed $K\in \mathbb{N}$. 
In other words, while the rank of $\Sigma_x$ associated with $\mathcal{O}(x)$ depends on $x$, it is bounded uniformly from above by $K$. Note that $K$ in general can be much larger than $d$, yet it is fixed
{due to the well known Nash's isometric embedding theory \cite{nash1956imbedding}, which guarantees the existence of $K$ that is independent of $p$ and $K\leq d(3d+11)/2$.} 
We put the following assumption.

\begin{assum}\label{Assumption O ellipsoid}
Assume $\mathcal{O}(x)=\phi(B_\epsilon(0))$, where $B_\epsilon(0)$ is a Euclidean ball centered at $0$ with the radius $\epsilon>0$, and $\phi:\mathbb{R}^d\to M$ is diffeomorphic on $B_\epsilon(0)$. We call $\mathcal{O}(x)$ an ellipsoid.
\end{assum}

Note that since $\phi$ is diffeomorphic, {in general $\mathcal{O}(x)$ is not really an ellipsoid unless $\phi$ is linear. But to simplify the terminology, we abuse the notation and still call it an ellipsoid. Also note that} the elliptic radii {of $\mathcal{O}(x)$} is of order $\epsilon>0$ {with the implied constant depending on the Jacobian of $\phi$ when $\epsilon$ is sufficiently small}.

Based on the developed theorem in Sections \ref{Section:OS:linearSpike} and \ref{Section:OS:sigmanot1}, we state the following theorem that secures the recovery of \MD~under the manifold model in Definition \ref{Definition:MD:manifoldModel}.  
The basic idea beyond this theorem is twofold. First, it relies on the result from \cite[Lemma 3 and Lemma 6]{2018arXiv180402811M} stating that when $\epsilon>0$ is sufficiently small, there is a sufficiently large gap between the first $d$ eigenvalues of $\Sigma_x$ and the remaining small eigenvalues. Second, as discussed in Section \ref{Section:OS:linearSpike}, any nontrivial eigenvalue $\ell_i$ that satisfies $\ell_i\ll \ell_+$ is ignored by the optimal shrinkage. 
\begin{thm}\label{Theorem:recover MD manifold}
Assume Assumptions \ref{assum1}-\ref{Assumption O ellipsoid} hold.
Fix $x\in M$. 
Suppose $\sigma=\sigma(\epsilon)$ so that $\sigma^2\sqrt{\beta}\epsilon^{-d-2}\to 0$ and $\sigma^2\sqrt{\beta}\epsilon^{-d-4}\to \infty$ when $\epsilon\to 0$. Assume the maximal elliptic radius of $\mathcal{O}(x)$ is $m\epsilon$, where $m=m(\epsilon)\asymp 1$, and the ratio of the maximal and minimal elliptic radii is fixed for all $\epsilon>0$.
When $\epsilon$ is sufficiently small, all nontrivial eigenvalues of $\Sigma_x$ except the top $d$ eigenvalues are set to zero by the optimal shrinkage $\tilde{\eta}^*$. 
\end{thm}
\begin{proof}
By \cite[Lemma 6]{2018arXiv180402811M}, the local covariance matrix has the following asymptotical expansion when $\epsilon$ is sufficiently small:
\begin{equation}
\Sigma_x=\frac{|S^{d-1}|P(x)\epsilon^{d+2}}{d(d+2)}[\iota_*|_x\nabla \phi|_{\phi^{-1}(x)}][\iota_*|_x\nabla \phi|_{\phi^{-1}(x)}]^\top+O(\epsilon^{d+4}),
\end{equation}
where $|S^{d-1}|$ is the volume of $S^{d-1}$.
A derivation, similar to the derivation in \cite[Lemma 3]{2018arXiv180402811M}, yields that when $\epsilon$ is sufficiently small, the top $d$ eigenvalues of $\Sigma_x$ are of order $\epsilon^{d+2}$, and the remaining eigenvalues are of order equal to or higher than $\epsilon^{d+4}$. In other words, the ``signal strength'' is of order $\epsilon^{d+2}$ while the noise strength is $\sigma>0$. Note that there are at most $K$ non-zero eigenvalues.

By \eqref{general_sh:eq}, all eigenvalues smaller than $\sigma^2\ell_+=\sigma^2\sqrt{\beta}$ are eliminated by the optimal shrinkage. 
Combining the above, since $\beta>0$, $P(x)$ and $m$ are all fixed, when $\epsilon$ is sufficiently small so that $\sigma^2\sqrt{\beta}\epsilon^{-d-2}$ is sufficiently small and $\sigma^2\sqrt{\beta}\epsilon^{-d-4}$ is sufficiently large, all nontrivial eigenvalues of $\Sigma_x$ except the top $d$ eigenvalues are set to zero by the optimal shrinkage $\tilde{\eta}^*$. Indeed, by the assumption that $\sigma^2\sqrt{\beta}\epsilon^{-d-2}\to 0$ and $\sigma^2\sqrt{\beta}\epsilon^{-d-4}\to \infty$, we know that $c_1\epsilon^{d+2}\geq \sigma^2\sqrt{\beta}$ and $c_2\epsilon^{d+4}\leq \sigma^2\sqrt{\beta}$ when $\epsilon$ is sufficiently small for some universal constants $c_1$ and $c_2$. Therefore, $\lambda_l \geq \sigma^2\sqrt{\beta}$ for $l=1,\ldots,d$ and $\lambda_l \leq \sigma^2\sqrt{\beta}$ for $l=d+1,\ldots,K$, and hence we conclude the proof.
\end{proof}

{We conclude this section with several remarks on this theorem. First, the condition $\sigma^2\sqrt{\beta}\epsilon^{-d-2}\to 0$ seems to be limited since the noise level goes to zero when $\epsilon$ goes to zero. This condition is needed if we want to properly estimate the precision matrix locally within an ellipsoid over a manifold with elliptic radii of order $\epsilon$. In practice, $\epsilon$ plays the role of ``bandwidth'' which reflects the ``resolution'' of how accurate we could estimate the quantity we have interest. For example, in the motivating EIG example discussed in Section \ref{sec:eig}, we need an accurate precision matrix estimation so that the geodesic distance in the latent space can be accurately estimated. Such precision matrix via \eqref{Cov vs Jacobian outer product} is less affected by the curvature if $\epsilon$ is small.
However, if the noise level $\sigma$ is fixed, then $\epsilon$ is bounded from below, so that we have a limited accuracy when we recover the geodesic distance on the latent space, and hence the latent space itself by DM.}

{Second, this theorem also suggests that we {\em always} need the noise to attain a reasonable estimate of \MD; that is $\sigma^2\sqrt{\beta}\epsilon^{-d-4}\to \infty$.} This statement is {certainly} counterintuitive, since in the linear spiked model, noise absence is desired and beneficial. {To clarify this point, note that in general the dimension of the manifold is unknown, but it is needed so that we can define a sensible \MD~on the manifold in Definition \ref{Definition:MD:manifoldModel}. See the discussion in Section \ref{sec:Mahalanobis_dist}.
Thus, a dimension estimation is needed. This theorem mainly says that when $\sigma>0$ is sufficiently large so that} the condition in Theorem \ref{Theorem:recover MD manifold} holds, we could get the \MD~in Definition \ref{Definition:MD:manifoldModel} {even if we do not know the dimension. If the noise is not sufficiently big and we do not know the dimension, then we cannot obtain the desired \MD. If the dimension is known, or can be robustly estimated when noise exists, then this lower bound condition can be removed. The above facts stem from the complicated nonlinear interaction of nonlinear manifold structure and high dimensional noise. Finally, we mention that this result extends the EIG study in \cite{2018arXiv180402811M} under the manifold model when noise exists.} 

\section{SIMULATION STUDY} \label{sec:simulation}

To numerically compare the optimal shrinker and the classical shrinker, we set $\beta = 0.2, 0.4,\ldots, 1$ and consider the number of samples $n =300$ so that $p=\beta n$. 
For simplicity, we set $\ell_i = i$ for $i=1,\ldots,d$. We consider $d=\{1,4\}$. 
Suppose $x_i$, $i=1,\ldots,n$ are sampled i.i.d. from the random vector $\sum_{\ell=1}^d \zeta_\ell e_\ell $, where $e_\ell\in \mathbb{R}^p$ is the unit vector with $\ell$-th entry $1$, $\zeta_\ell \sim N(0,1)$ for $\ell=1,\ldots,d$, and $\zeta_\ell$ is independent of $\zeta_k$ when $\ell \neq k$. The noisy data is simulated by $y_i = Ax_i+\sigma^2\xi$, where $\xi \sim \Nc \left( 0, I_p \right)$ is the noise matrix, $\xi$ is independent of $\zeta_\ell$ and $A\in O(p)$ is randomly sampled from $O(p)$. In the simulation, we take $\sigma=0.225,0.45,\ldots,1.8$. For each $\sigma$, we repeat the experiment $200$ times and report the mean and variance of the loss $L_n$.

Figure \ref{fig:d4_oploss} shows the loss of the optimal and classical
shrinkers when $d=1$ and $d=4$ in a logarithmic scale. We observe that the loss using the classical
shrinker is significantly larger. This stems from the fact that in the large $p$ and
large $n$ regime, there are eigenvalues greater than $\sigma^2$ that are not
associated with the signal. When applying the classical shrinker \eqref{eta_classic:eq} (the Moore-Penrose pseudo-inverse), these irrelevant eigenvalues contribute significantly, leading to high loss. 
Conversely, the optimal shrinker is much more `selective' (as illustrated in Figure \ref{fig:shrinker}), associating larger eigenvalues with the noise, thereby increasing the robustness of the estimator.

\begin{figure*}
\noindent \includegraphics[trim=150 0 150 0, width=1\textwidth]{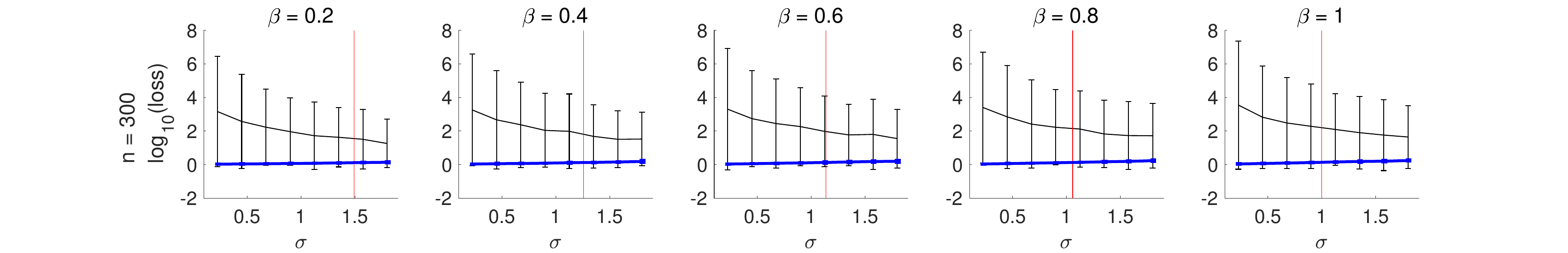}
\includegraphics[trim=150 0 150 0, width=1\textwidth]{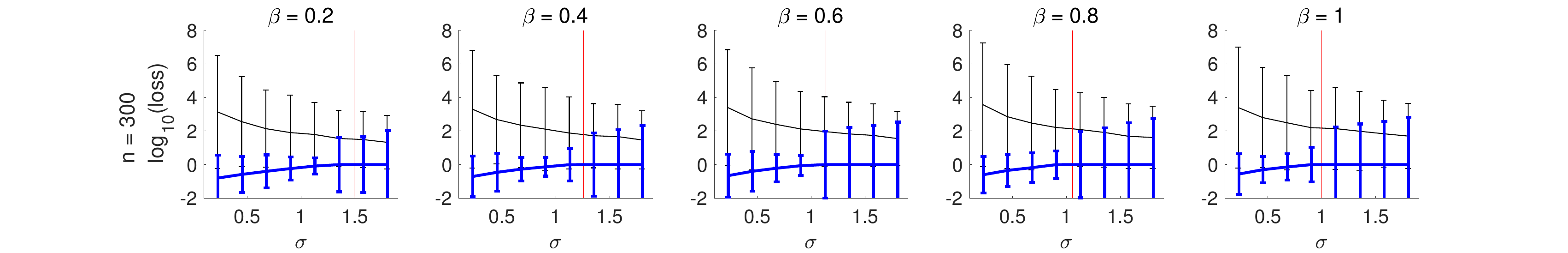}
\caption{(top) The performance of the optimal shrinker and the classical shrinker when $d=1$. (bottom) The performance of the optimal shrinker and the classical shrinker when $d=4$. The black curve represents the median of the difference between the loss of the classical shrinker and that of the theoretical optimal loss presented in log scale. The blue curve represents the median of the difference between the loss of the optimal shrinker and the theoretical optimal loss presented in log scale. The error bars depict the interquartile range of each shrinker (in log scale). The vertical blue line is $\sigma = 1/\sqrt{\ell_+}$, which indicates the tolerable noise level for the given $\beta$ and signal strength.\label{fig:d4_oploss}} 
\end{figure*}


{
Recall that our main motivation for considering \MD~in the high-dimensional regime $p\asymp n$ comes from manifold learning. 
Next, we test the performance of the proposed OS algorithm on high-dimensional data lying on a lower-dimensional manifold. 
Consider the following model 
\begin{equation}\label{eq:sim1a}
Y = X+\sigma \xi\,, 
\end{equation}
where $X$ is sampled from a curved manifold with one chart $\mathcal{M}$ embedded in $\mathbb{R}^p$ that is parametrized by 
\begin{equation}\label{eq:sim1b}
\left[s,t,4\left(\frac{s}{3}\right)^2+5\left(\frac{t}{3}\right)^2,0,\ldots,0 \right]^\top \in \mathbb{R}^p\,,
\end{equation}
$s,t \in [-5,5]$, $\xi \sim \Nc \left( 0, I_p \right) \in \mathbb{R}^p$, and $\sigma>0$ is the noise level. Note that since the precision matrix estimation and \MD~estimation are both local, we consider this one-chart manifold without loss of generality. Suppose the ambient dimension is fixed at $p = 100$. For various values of $\beta>0$, we sample $n = \lceil p/\beta\rceil$ pairs $(s,t)$ uniformly from $[-5,5]\times [-5,5]$ and generate $n$ nonuniform points on $\mathcal M$ by \eqref{eq:sim1b}. Then, these points are corrupted by additive noise with various levels $\sigma$ according to \eqref{eq:sim1a}.
The normalized loss of \MD~is computed by 
\[
	\texttt{Error}(M_n,y) := \frac{\left|d_{M_n}(y, X) - d_{\Sigma_X^{\dagger}}(y, X)\right|}{d_{\Sigma_X^{\dagger}}(y, X)},
\]
where $y\in \mathcal{M} \subset \mathbb{R}^p$ is an arbitrary point on the manifold and $M_n$ is the estimated covariance matrix.
We examine two reference points: $y_1 = [0,\ldots,0]\in \mathcal{M}$ and $y_2 = [2,2,4,0,\ldots,0] \in \mathcal{M}$. For each case, the simulation was repeated for $500$ times, and the mean and standard deviation of errors are reported.
In Table \ref{table}, we compare the performance of the OS estimator, $\tilde{\eta}^*(S_n)$, where $S_n$ is the sample covariance, to the performance of the classical estimator $\eta^{\text{classical}}_{\sigma}(S_n)$. We observe that the OS outperforms the classical estimator in this well-controlled manifold setup, {and we could see that the larger the noise, the worse the performance is. Moreover, the higher the dimension is, that is, the larger the $\beta$ is, the worse the performance is}. 

\section{{APPLICATION TO DYNAMICAL SYSTEM ANALYSIS}} \label{sec:simulation2}
Next, we apply our OS estimator on the multiscale reduction problem studied in \cite{dsilva2016data}. We consider the following two-dimensional stochastic differential equation (SDE)
\begin{equation} \label{eq:specific_SDE}
\begin{aligned}
dx_1(t) &=& 3dt &+& dW_1(t)\\
dx_2(t) &=& -\frac{x_2(t)}{\epsilon} dt &+& \frac{1}{\sqrt{\epsilon}} dW_2(t)\,,
\end{aligned}
\end{equation}
where $W_1$ and $W_2$ are independent standard Bronwian motion and $\epsilon>0$ is a small constant quantifying the scale of $x_2$. This SDE defines a dynamical system with two time scales, where $x_1$ is a slow variable and $x_2$ is a fast variable.
Suppose the state of the system $(x_1(t),\,x_2(t))$ is hidden and we have access to it through an embedding in a high dimension space via the map
\begin{equation} \label{eq:nonlinear_function}
\begin{aligned}
\mathbf{y}(t) = 
\begin{bmatrix}
y_1(t) \\ y_2(t) \\ y_3(t) \\ \vdots \\ y_p(t)
\end{bmatrix} &=&
\begin{bmatrix}
f_1(x_1(t),x_2(t))  \\
f_2(x_1(t),x_2(t)) \\ 0 \\ \vdots \\ 0
\end{bmatrix}\\
\end{aligned}.
\end{equation}
In addition, suppose the high-dimensional observation $\mathbf{y}(t)$ is contaminated by noise:
\begin{equation} \label{eq:embedding}
\begin{aligned}
\mathbf{z}(t) = 
\begin{bmatrix}
z_1(t) \\ z_2(t) \\ z_3(t) \\ \vdots \\ z_p(t)
\end{bmatrix}
&=&  \mathbf{y}(t) + 
\begin{bmatrix}
w_1(t) \\
w_2(t)\\ w_3(t) \\ \vdots \\ w_p(t)
\end{bmatrix}\,,
\end{aligned}
\end{equation}
\begin{equation}
\end{equation}
where 
\[
    dw_i(t) = \sigma * d\Omega_i(t),
\]
$\Omega_i(t)$ are independent standard Brownian motion, and $\sigma$ is the noise level.

In \cite{dsilva2016data}, diffusion maps \cite{coifman2006diffusion} with a Gaussian kernel based on the MD \cite{singer2008non} was applied to effectively reduce the dimensionality of the system by attenuating the fast variable and recovering the slow variable. Specifically, the following kernel matrix $\mathbf{W} \in \mathbb{R}^{N \times N}$ was used:
\begin{equation} \label{eq:dmaps_kernel}
W_{ij} = \exp \left( -\frac{\|\mathbf{y}(t_i) - \mathbf{y}(t_j) \|_{M}^2}{\sigma_{kernel}^2} \right)\,,
\end{equation}
where $\| \cdot \|_{M}$ is defined by
\begin{equation} \label{eq:mahalanobis_distance}
 \| \mathbf{y}(t_i) - \mathbf{y}(t_j) \|^2_M :=
 \frac{1}{2} (\mathbf{y}(t_i) - \mathbf{y}(t_j))^T \left( \mathbf{C}^{\dagger}(\mathbf{y}(t_i)) + \mathbf{C}^{\dagger}(\mathbf{y}(t_j)) \right) (\mathbf{y}(t_i) - \mathbf{y}(t_j)),
 \end{equation}
where $\mathbf{C}(\mathbf{y}(t))$ is the local population covariance of the observed stochastic process {\em at the point} $\mathbf{y}(t)$, and $\sigma_{kernel} $ is the selected kernel scale. 

We generate $N$ samples of the noisy observation $\mathbf{z}(t_i)$, where $t_i:=idt$ and $i=1,\ldots,N$. At each sampling point $t_i$, we generate a trajectory consisting of $q$ samples of noisy observations with sampling time interval $\delta t$ starting at $\mathbf z(t_i)$, such that we have $q$ samples: $\mathbf{z}(t'_1), \mathbf{z}(t'_2 ),. . . , \mathbf{z}(t'_q)$ drawn approximately from the local distribution at $\mathbf{z}(t_i)$. 
Here, we set $ \delta t = o(\epsilon)$, where $\epsilon>0$ is from \eqref{eq:specific_SDE}.
Let $\mathbf{\bar{z}}$ denote the local sample mean of $\mathbf{z}(t'_1),\ldots,\mathbf{z}(t'_q)$. The local sample covariance is given by 
\begin{equation}\label{eq_localcov}
\hat{\mathbf{C}}(\mathbf{y}(t_i)) = 
\frac{1}{\delta t(q-1)}\sum_{j = 1}^q (\mathbf{z}(t'_j)-\bar{\mathbf{z}})(\mathbf{z}(t'_j)-\bar{\mathbf{z}})^{\T}.
\end{equation} 
For the estimation of the local precision at $\mathbf{y}(t_i)$, that is, $\mathbf{C}^\dagger(\mathbf{y}(t_i))$ from the noisy samples $\mathbf{z}(t)$, we test two methods. 
The first method is based simply on the pseudo-inverse of $\hat{\mathbf{C}}(\mathbf{y}(t_i))$. 
The second method is based on the OS estimator $\eta_{\sigma}$ that is applied to \eqref{eq_localcov} to estimate the local precision estimation. 
Once the local precision matrices are estimated, we construct the following distance matrix $\mathbf{D}$, whose elements are given by
\begin{equation} \label{eq:mahalanobis_distance estimate}
 \mathbf{D}_{i,j} = \frac{1}{2} (\mathbf{z}(t_i) - \mathbf{z}(t_j))^T \left( \hat{\mathbf{C}}^{\dagger}(\mathbf{y}(t_i)) + \hat{\mathbf{C}}^{\dagger}(\mathbf{y}(t_j)) \right) (\mathbf{z}(t_i) - \mathbf{z}(t_j)),
 \end{equation}
 
We examine three cases of observation functions $f_1$ and $f_2$:
\paragraph{Case I.} Let
\begin{equation}\label{eq_embed1}
f_1(x_1(t),x_2(t)) = x_1(t), \quad f_2(x_1(t),x_2(t)) = x_2(t).
\end{equation}
and set $\epsilon = 10^{-3}$, $dt = 10^{-4}$, $\delta t= 10^{-7}$, $N = 3000$, $p = 50$, $q = 50$, $\sigma = 0.1$, and $\sigma_{kernel}$ to the $20$ percent quantile of the distance matrix $\mathbf{D}$ acquired from \eqref{eq:mahalanobis_distance estimate}. These parameters are taken from \cite{dsilva2016data}.

\paragraph{Case II.}
Let
\begin{equation}\label{eq_embed2}
f_1(x_1(t),x_2(t)) = x_1(t)+2x_2(t), \quad f_2(x_1(t),x_2(t)) = x_2(t),
\end{equation}
and set the parameters as in Case I.

\paragraph{Case III.}
Let
\begin{equation}\label{eq_embed3}
f_1(x_1(t),x_2(t)) = x_1(t)+x_2^2(t), \quad f_2(x_1(t),x_2(t)) = x_2(t).
\end{equation}
We set the parameters as in Case I and II, except for $\delta t$, which is set to a finer value $10^{-10}$, and $\sigma_{kernel}$, which is set to $5$ percent quantile of the distance matrix $\bf{D}$. These observations functions create a ``half-moon'' shape, and where considered in \cite{dsilva2016data}.

In Fig. \ref{plot_f1}, we compare the results obtained for Case I by diffusion maps based on three distance matrices. The first distance matrix is a variant of  \eqref{eq:mahalanobis_distance estimate}, where the noisy signal $\mathbf{z}(t)$ is replaced by the clean signal $\mathbf{y}(t)$. These results are plotted in the leftmost column. The second distance matrix is \eqref{eq:mahalanobis_distance estimate}, where the estimates of the precision matrix are based on the pseudo-inverse. These results are plotted in the middle column. The third distance matrix is \eqref{eq:mahalanobis_distance estimate}, where the estimates of the precision matrix are based on the proposed OS. These results are plotted in the rightmost column.
In the first row, we show a scatter plot of the slow variable $x_1$ against its most correlated eigenvector of the transition matrix associated with the diffusion map. In the second row, we show the scatter plot of the fast variable $x_2$ against its most correlated eigenvector of the transition matrix associated with the diffusion map. 
The correlation between the respective variables and eigenvectors is shown in red above each subplot. 
In the third row, we present the eigenvalues of the transition matrix associated with the diffusion map, where $d_k$ is the $k$-th largest eigenvalue.
The eigenvalues corresponding to the eigenvectors presented in the first and second rows (corresponding to the slow and fast variables) are marked by red and blue circles, respectively.
Fig. \ref{plot_f2} and Fig. \ref{plot_f3} are the same as Fig. \ref{plot_f1}, but for Case II and Case III.

We observe that all three figures present consistent results and trends.
Focusing first on the leftmost column, we see that the use of MD in diffusion maps based on the clean signal recovers accurately the slow variable $x_1$ and attenuates the fast variable $x_2$, conforming to the results presented in \cite{dsilva2016data} for only a 2-dimensional observations. 
In the middle column, we see that the addition of noise hinders both the recovery of the slow variable as well as the attenuation of the fast variable, when the pseudo-inverse is used for the estimation of the MD.
In contrast, in the rightmost column, we see that when the proposed OS is used, the obtained results based on the noisy signal are comparable to the results obtained based on the clean signal in the leftmost column.

These results demonstrate, in the context of manifold learning, that the estimation of the MD based on the pseudo-inverse fails in the high-dimensional regime with the presence of noise. In addition, they show that using the proposed optimal shrinker indeed offers a remedy and gives rise to accurate manifold learning. 
}

\begin{figure}[hbt!]
\center
\includegraphics[trim=50 0 50 0, width=.7\textwidth]{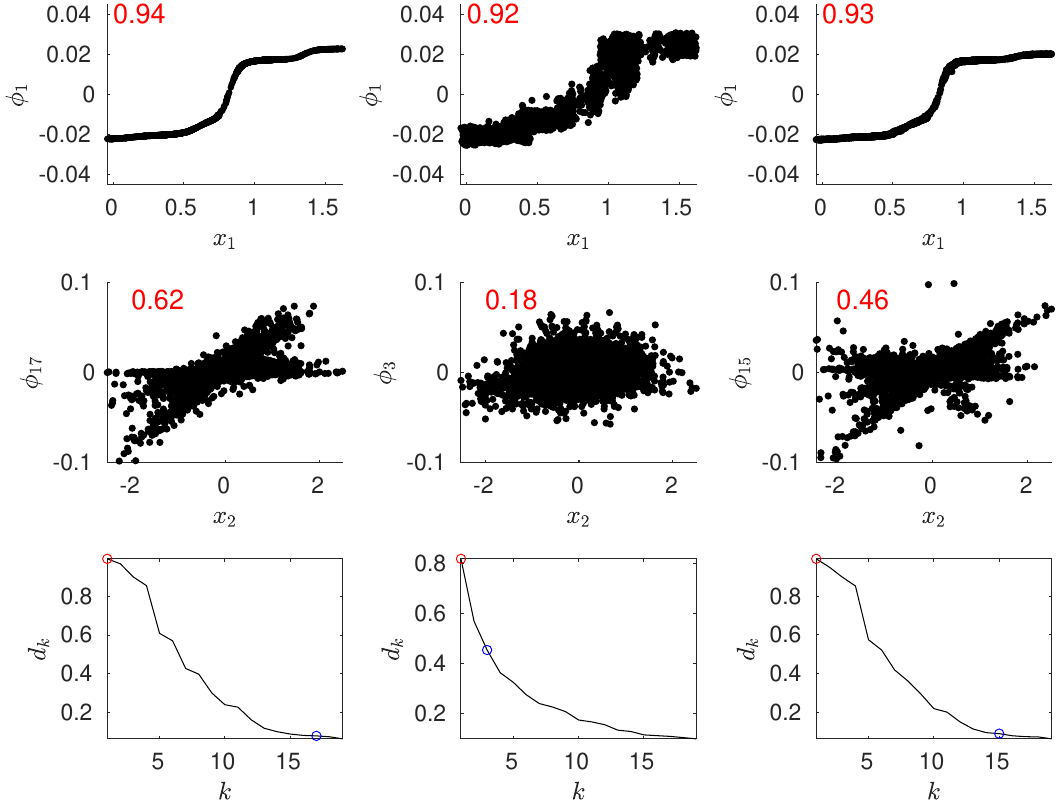}
\caption{The results obtained for Case I by diffusion maps with different distances. Left column: the distance matrix is a variant of  \eqref{eq:mahalanobis_distance estimate}, where the noisy signal $\mathbf{z}(t)$ is replaced by the clean signal $\mathbf{y}(t)$. Middle column: the distance matrix is defined in \eqref{eq:mahalanobis_distance estimate}, where the estimates of the precision matrix are based on the pseudo-inverse. Right: the third distance matrix is defined in \eqref{eq:mahalanobis_distance estimate}, where the estimates of the precision matrix are based on the proposed OS.
First row: the scatter plot of the slow variable $x_1$ and the most correlated eigenvector of the transition matrix associated with diffusion maps. Second row: the scatter plot of the fast variable $x_2$ and its most correlated eigenvector of the transition matrix associated with diffusion maps. 
The correlation between the respective variables and eigenvectors is shown in red above each subplot. 
Third row: the eigenvalues of diffusion maps, where $d_k$ is the $k$-th largest eigenvalue.
The eigenvalues corresponding to the eigenvectors presented in the first and second rows (corresponding to the slow and fast variables) are marked by red and blue circles, respectively.}
\label{plot_f1}
\end{figure}

\begin{figure}[hbt!]
\center
\includegraphics[trim=50 0 50 0, width=.7\textwidth]{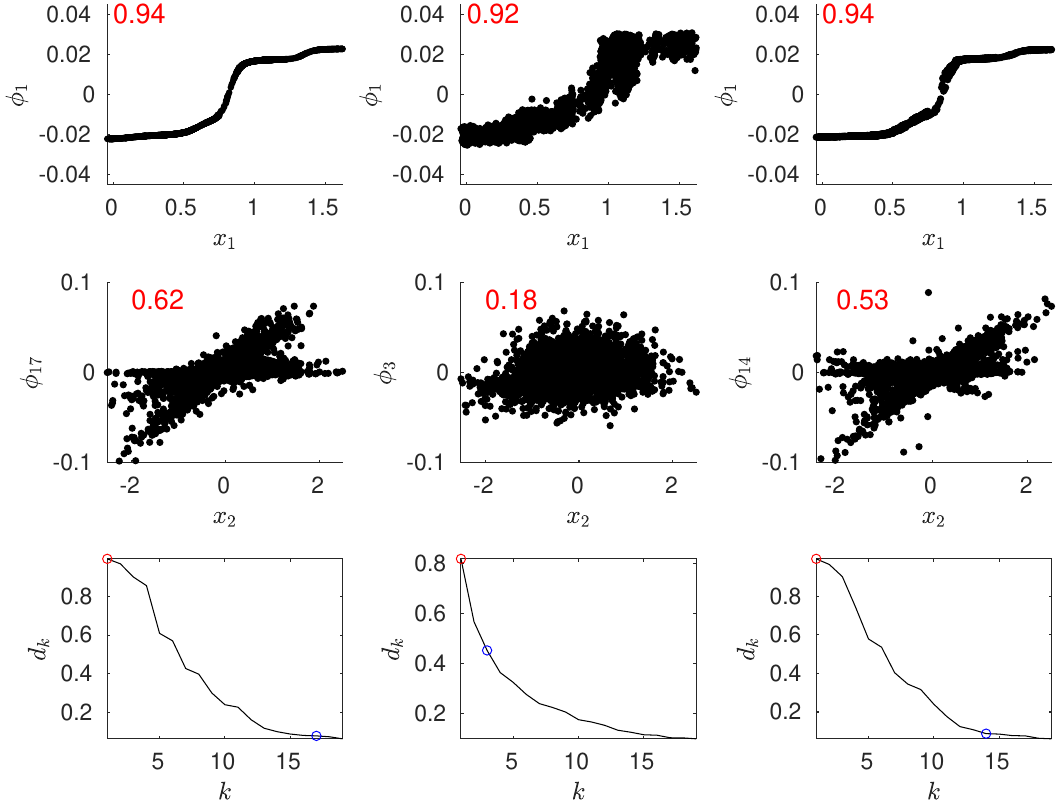}
\caption{The results obtained for Case II by diffusion maps with different distances. The meaning of the plot is the same as that detailed in the caption of Figure \ref{plot_f1}.}
\label{plot_f2}
\end{figure}

\begin{figure}[hbt!]
\center
\includegraphics[trim=50 0 50 0, width=.7\textwidth]{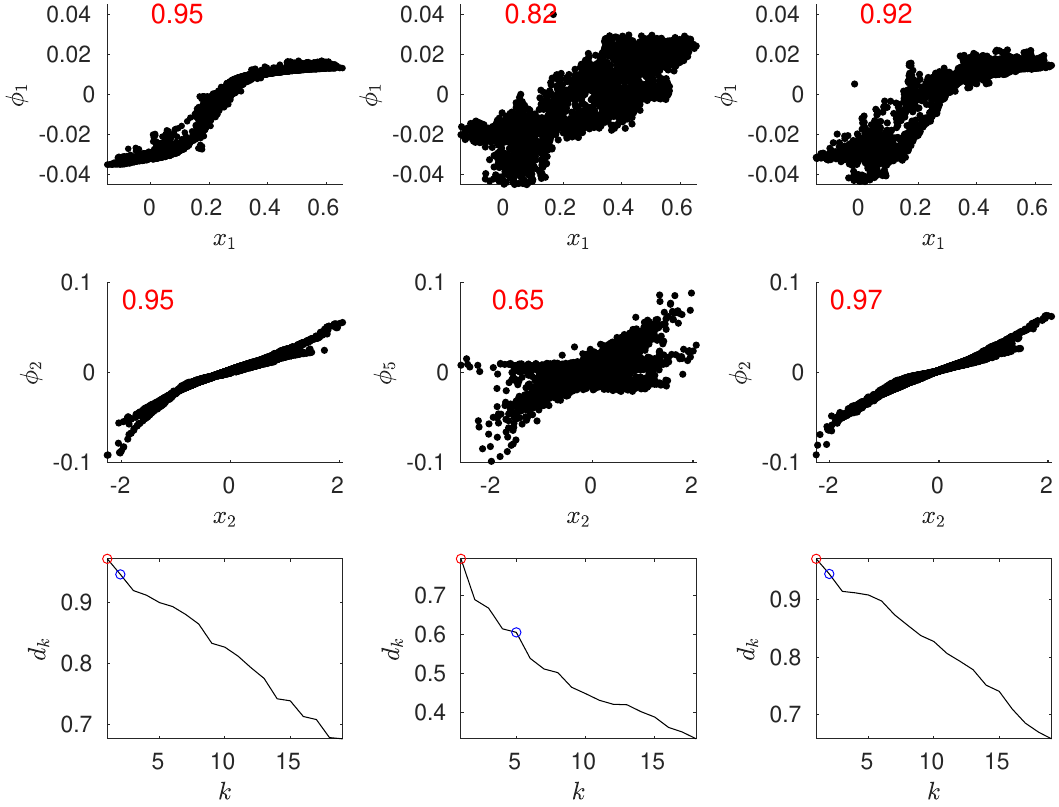}
\caption{The results obtained for Case III by diffusion maps with different distances. The meaning of the plot is the same as that detailed in the caption of Figure \ref{plot_f1}.}
\label{plot_f3}
\end{figure}

\begin{table*}[hbt!] 

\caption{The normalized loss of the optimal shrinker estimator $M_n = \tilde{\eta}^*(S_n)$ and the classical estimator $M_n = \eta^{\text{classical}}_{\sigma}(S_n)$ in the manifold setup. The mean and the standard deviation over $500$ realizations are reported.}
\centering
\begin{tabular}{cc||cc|cc}
\\
\multicolumn{2} {c||} {} & \multicolumn{2} {c|} {$\texttt{Error}(M_n,y_1)$} &  \multicolumn{2} {c} {$\texttt{Error}(M_n,y_2)$} \\
& & $\eta^{\text{classical}}_{\sigma}(S_n)$ & $\tilde{\eta}^*(S_n)$ & $\eta^{\text{classical}}_{\sigma}(S_n)$ & $\tilde{\eta}^*(S_n)$  \\
\hline\hline
 & $\sigma = 1$ & 18.78 $\pm$ 1.16 & 0.78 $\pm$ 0.54 & 55.98 $\pm$ 2.09 & 1.32 $\pm$ 0.93\\
$\beta = 0.1$ & $\sigma = 1.5$ & 23.72 $\pm$ 1.19 & 1.41 $\pm$ 0.87 & 59.42 $\pm$ 2.10 & 2.59 $\pm$ 1.74\\
& $\sigma = 2$ & 33.54 $\pm$ 1.53 & 2.18 $\pm$ 1.31 & 64.04 $\pm$ 1.69 & 5.19 $\pm$ 2.99\\
\hline
 & $\sigma = 1$ & 26.86 $\pm$ 2.70 & 2.41 $\pm$ 1.55 & 60.88 $\pm$ 4.22 & 4.06 $\pm$ 2.54\\
$\beta = 0.5$ & $\sigma = 1.5$ & 42.66 $\pm$ 3.43 & 4.78 $\pm$ 2.65 & 69.06 $\pm$ 3.43 & 10.65 $\pm$ 5.38\\
& $\sigma = 2$ & 58.59 $\pm$ 3.24 & 9.84 $\pm$ 4.63 & 77.24 $\pm$ 2.81 & 31.52 $\pm$ 17.56\\
\hline
 & $\sigma = 1$ & 34.70 $\pm$ 4.89 & 4.05 $\pm$ 2.35 & 64.11 $\pm$ 5.57 & 8.39 $\pm$ 3.91\\
$\beta = 1$ & $\sigma = 1.5$ & 54.72 $\pm$ 4.46 & 10.62 $\pm$ 4.69 & 75.54 $\pm$ 3.63 & 23.97 $\pm$ 12.49\\
& $\sigma = 2$ & 69.65 $\pm$ 3.97 & 21.35 $\pm$ 7.94 & 83.30 $\pm$ 2.78 & 62.99 $\pm$ 19.34\\
\hline
\end{tabular}
\label{table}
\end{table*}

\section{CONCLUSIONS} \label{sec:conclusion}

We proposed a new estimator for \MD~based on precision matrix shrinkage. For an appropriate loss function, we show that the proposed estimator is asymptotically optimal and outperforms the classical implementation of \MD~using the Moore-Penrose pseudo-inverse of the sample covariance. Importantly, the proposed estimator is particularly beneficial when the data is noisy and in high-dimension, a case in which the classical \MD~estimator might completely fail. Consequently, we believe that the new estimator may be useful in modern data analysis applications, involving for example, local principal component analysis, metric design, and manifold learning. 

In this work, we focused on the case in which the intrinsic dimensionality of the data (the rank of the covariance matrix) $d$ is unknown, and therefore, it was not explicitly used in the estimation.
Yet, in many scenarios, this dimension is known. In this case, it could be beneficial to consider a direct truncation and use only the top $d$ eigen-pairs for the estimation of the precision matrix. 
While the benefit from such a truncation has been shown empirically in several applications \cite{talmon2015manifold,wu2015assess,yair2017reconstruction,liu2018diffuse}, it still requires a systematic investigation. For example, identifying the rank of the signal, or estimating the dimension of a manifold, are by themselves highly challenging tasks \cite{Kritchman:2009:NDN:1653500.1653516,levina2005maximum}.
Note that in the particular manifold setup, knowing the manifold dimension is essentially different from the {\em rank-aware shrinker} discussed in \cite{2013arXiv1311.0851D}; as we showed here, under the manifold setup, the rank of the covariance matrix associated with points residing inside a small neighborhood of any point on the manifold could be much larger than $d$. {Since the focus of this paper is \MD~recovery, the associated loss function for the OS of the precision matrix is the operator norm. As is discussed in \cite{2013arXiv1311.0851D}, there are other loss functions that we can choose, like the Frobenius norm, the nuclear norm, etc. It would be interesting to explore if the OS for the precision matrix under those norms when they are needed. We shall also mention that the manifold model discussed in this paper is a simplified model for more complicated datasets with more nonlinear structure. While it is possible to apply the principle component analysis approach to denoise the data when the smooth compact manifold assumption holds thanks to the fixed $K$ mentioned in Section \ref{Sec:manifoldModelOptShr}, in general this approach might not be optimal, particularly when the geometric structure is more complicated. The current work explores this situation with a simplified manifold model and paves a way toward more complicated setups, and these cases will be explored in our future work.

}

\section*{Acknowledgements}
{MG has been supported by H-CSRC Security Research Center and Israeli Science Foundation grant no. 1523/16. MG and RT were supported by a grant from the Tel-Aviv University ICRC Research Center. We thank the anonymous reviewers for their constructive comments and critiques that highly improve the original manuscript.}

	\bibliographystyle{plain}
	\bibliography{refs}

    \end{document}